\setlist{noitemsep,leftmargin=*}
\theoremstyle{plain}
\newtheorem{lem}{Lemma}
\newtheorem{prop}[lem]{Proposition}
\newtheorem{thm}[lem]{Theorem}
\newtheorem{cor}[lem]{Corollary}
\theoremstyle{definition}
\newtheorem{defn}[lem]{Definition}
\newtheorem{example}{Example}
\theoremstyle{remark}
\newtheorem{rem}[lem]{Remark}
\def\P{\mathbb P}
\def\N{\mathbb N}
\def\Z{\mathbb Z}
\def\C{\mathbb C}
\def\D{\mathbb D}
\def\R{\mathbb R}
\def\H{\mathbb H}
\def\eps{\epsilon}
\DeclareMathOperator{\ord}{ord}
\DeclareMathOperator{\norm}{N}
\DeclareMathOperator{\normalization}{NC}
\newcommand{\SO}[1][3]{\mathrm{SO}_{#1}}
\newcommand{\SE}[1][3]{\mathrm{SE}_{#1}}
\newcommand{\ci}{\mathrm{i}}
\newcommand{\qi}{\mathbf{i}}
\newcommand{\qj}{\mathbf{j}}
\newcommand{\qk}{\mathbf{k}}
\renewcommand{\bar}[1]{\overline{#1}}
\newcommand{\eoex}{$\diamond$}
\title{The Theory of Bonds: A New Method for the Analysis of Linkages}
\author{Gábor Hegedüs\thanks{Obuda University, John von Neumann Faculty on Informatic,
Applied Mathematical Institute, Antal Bejczy Center for Intelligent Robotics,
    \href{mailto:greece@math.bme.hu}{greece@math.bme.hu}}, %
  Josef Schicho\thanks{Johann Radon Institute for Computational and
    Applied Mathematics, Austrian Academy of Sciences (RICAM),
    \url{http://www.risc.jku.at/people/jschicho/},
    \href{mailto:josef.schicho@oeaw.ac.at}{josef.schicho@oeaw.ac.at}}, %
  and Hans-Peter Schröcker\thanks{University Innsbruck, Unit
    Geometry and CAD,
    \url{http://geometrie.uibk.ac.at/schroecker/},
    \href{mailto:hans-peter.schroecker@uibk.ac.at}{hans-peter.schroecker@uibk.ac.at}}}
\begin{document}

\maketitle
\begin{abstract}
  In this paper we introduce a new technique, based on dual
  quaternions, for the analysis of closed linkages with revolute
  joints: the theory of bonds. The bond structure comprises a lot of
  information on closed revolute chains with a one-parametric
  mobility. We demonstrate the usefulness of bond theory by giving a
  new and transparent proof for the well-known classification of
  overconstrained 5R linkages.
\end{abstract}

\noindent Keywords: Dual quaternions, bond theory, overconstrained
revolute chain, overconstrained 5R chain.

\par\medskip\noindent MSC 2010:
70B15, 
51J15, 
14H50  

\section*{Introduction}

In this paper we rigorously develop the theory of bonds \cite{HSS3}, a
tool for the analysis of closed linkages with revolute joints and one
degree of freedom. The configuration curve of such a linkage can be
described by algebraic equations. Intuitively, bonds are points in the
configuration curve with complex coefficients where something
degenerate happens. For a typical bond of a closed $n$R chain, there
are exactly two joints with degenerate rotation angles (see
Theorem~\ref{thm:i} and the subsequent remark). In this way, the bond
``connects'' the two links. It is remarkable that a lot of information
on the linkage can be extracted from this combinatorial behavior of
the bonds.

In order to describe the forward kinematic map from the configuration
curve into the group of Euclidean displacements, we use the language
of dual quaternions (see also \cite{Blaschke:60,Hao,husty10,selig05}).
For any pair of links, the set of possible relative poses is a curve
on the Study quadric in the projective space $\P^7$. In
Theorem~\ref{thm:bond} we compute the degrees of these curves by the
combinatorial behavior of the bonds.

Theorem~\ref{thm:coup} is interesting in its own right. It relates the
geometry of three consecutive revolute axes with the dimension of a
certain linear subspace of the 8-dimensional vector space $\D\H$ of
dual quaternions. In general, this subspace is equal to $\D\H$, but,
it may also be of dimension 4 or 6 for particular positions of the
three lines. More precisely, the dimension is 4 if and only if the
three lines are parallel or meet in a common point, and it is 6 if and
only if the lines appear as revolute axes in a 4R Bennett linkage. We
show that a bond of certain type appears if and only if the dimension
of the corresponding linear subspace is less than~8.

Section~\ref{sec:bonds} features a rigorous definition of bonds and
connection numbers. We visualize the latter in bond diagrams and show
how to read off linkage properties. As an application of bond theory,
we give a new proof of Karger's classification theorem for
overconstrained closed 5R-linkages \cite{Karger} in
Section~\ref{sec:classification}. In contrast to Karger's original
proof, it does not require the aid of a computer algebra system.

We announced simplified versions of the results of this paper in
\cite{HSS3} without proofs. %
Supplementary material to this article can be found on the accompanying web-site %
\url{http://geometrie.uibk.ac.at/schroecker/bonds/}.

\section{Dual quaternions}
\label{sec:dual-quaternions}

In this section, we recall the well-known and classical description of
the group of Euclidean displacements by dual quaternions; it is almost
identical to \cite[Section~2]{HSS2}. We just include it here to make
this paper more self-contained. More complete references are
\cite{Blaschke:60,Hao,husty10,selig05}.

We denote by $\SE$ the group of direct Euclidean displacements, i.e.,
the group of maps from $\R^3$ to itself that preserve distances and
orientation. It is well-known that $\SE$ is a semidirect product of
the translation subgroup and the orthogonal group $\SO$, which may be
identified with the stabilizer of a single point.

We denote by $\D:=\R+\eps\R$ the ring of dual numbers, with
multiplication defined by $\eps^2=0$. The algebra $\H$ is the
non-commutative algebra of quaternions, and $\D\H$ is the algebra of
quaternions with coefficients in $\D$. Every dual quaternion has a
primal and a dual part (both quaternions in $\H$), a scalar part in
$\D$ and a vectorial part in $\D^3$. The conjugate dual quaternion
$\bar{h}$ of $h$ is obtained by multiplying the vectorial part of $h$
by $-1$. The dual numbers $\norm(h) = h\bar{h}$ and $h+\bar{h}$ are
called the \emph{norm} and \emph{trace} of $h$, respectively.

By projectivizing $\D\H$ as a real 8-dimensional vectorspace, we
obtain $\P^7$. The condition that $\norm(h)$ is strictly real, i.e.\
its dual part is zero, is a homogeneous quadratic equation. Its zero
set, denoted by $S$, is called the Study quadric. The linear 3-space
represented by all dual quaternions with zero primal part is denoted
by $E$. It is contained in the Study quadric. The complement $S-E$ can
be identified with $\SE$. The primal part describes $\SO$.
Translations correspond to dual quaternions with primal part $\pm 1$
and strictly vectorial dual part. More precisely, the group
isomorphism is given by sending $h=p+\eps q$ to the map
\begin{equation*}
  \R^3\to\R^3,\quad
  v\mapsto\frac{pv\bar{p}+p\bar{q}-q\bar{p}}{p\bar{p}}.
\end{equation*}
(see \cite[p.~48]{Blaschke:60} or \cite[Section~2.4]{husty10}).

A nonzero dual quaternion represents a rotation if and only if its
norm and trace are strictly real and its primal vectorial part is
nonzero. It represents a translation if and only if its norm and trace
are strictly real and its primal vectorial part is zero. The
1-parameter rotation subgroups with fixed axis and the 1-parameter
translation subgroups with fixed direction can be geometrically
characterized as the lines on $S$ through the identity element $1$.
Among them, translations are those lines that meet the exceptional
3-plane~$E$.

\section{Linkages}
\label{sec:linkages}

In this section, we introduce some terminology on linkages, like
coupling curves and coupling spaces (relative motions between links,
described in terms of dual quaternions and linear spans of these
curves), and prove a useful theorem about the dimension of coupling
spaces.

We describe an open chain of $n > 0$ revolute joints by a sequence $L
= (h_1,\ldots,h_n)$ of unit dual quaternions $h_1,\ldots,h_n$ of zero
scalar part. Algebraically, this means that $h_i\bar{h_i} = -h_i^2 =
1$. Geometrically, we represent a revolute joint by a half-turn (a
rotation by the angle $\pi$). The group parametrized by
$(t-h_i)_{t\in\P^1}$ -- the parameter $t$ determines the rotation
angle -- is the group of the $(i+1)$-th link relative to the $i$-th
link. The position of the last link with respect to the first link is
then given by a product $(t_1-h_1)(t_2-h_2)\cdots(t_n-h_n)$, with
$t_1,\dots,t_n\in\P^1$. For a closed chain, we have the closure
condition
\begin{equation}
  \label{eq:1}
  (t_1-h_1)(t_2-h_2)\cdots(t_n-h_n) \in \R \setminus \{0\}.
\end{equation}
We view closed chains as cyclic sequences $L = (h_1,\ldots,h_n)$ and
we reflect this in the notational convention $h_{kn+i} := h_i$ for $k
\in \Z$.

\begin{defn}
  For a closed chain of revolute joints as described above, the set
  $K$ of all $n$-tuples $(t_1,\dots,t_n)\in(\P^1)^n$ fulfilling
  \eqref{eq:1} is called the chain's \emph{configuration set.}
\end{defn}

The dimension of the configuration set is called the \emph{degree of
  freedom} or the \emph{mobility} of the linkage. In this paper we
consider linkages of mobility one. This already implies $4 \le n \le
7$. For $n = 4$, we obtain planar, spherical or spatial four bar
linkages. The latter are usually referred to as Bennett linkages
\cite[Chapter~10, Section~5]{hunt78}. In general, closed chains of $n
< 7$ revolute joints are rigid. Thus, our results in this paper refer
to planar and spherical four bar linkages, to linkages of paradoxical
mobility with less than seven joints, and to linkages with seven
joints and one degree of freedom.

A linkage is a set of links, a set of joints, and a relation between
them, which we call ``attachment''. Any link has at least one attached
joint, and any joint has at least two attached links. If two joints
are attached to two links, then either the two joints or the two links
are equal. The link diagram is a linear hypergraph \cite[Ch.~1,
\S~2]{berge89} whose vertices are the links and whose hyperedges are
the joints; dually, the joint diagram is a linear hypergraph whose
vertices are the joints and whose hyperedges are the links. In both
cases, hyperedges are needed because a link can have more than two
attached joints and a joint can be attached to more than two links. In
this paper we will mostly be concerned with open and closed chains
with revolute joints. Here the two hypergraphs are just simple graphs,
consisting of a path or cycle. Nonetheless, it should be kept in mind
that the theory we develop can also be applied to cycles in general
linkages.

To each revolute joint we attach its axis of rotation (a line in
$\R^3$). It can be represented by the same dual quaternion $h_i$ as
the joint. This is almost the same as the representation of lines by
normalized Plücker coordinates which are composed of primal part and
negative dual part. The line determines $h_i$ up to multiplication
with $-1$. A configuration of a linkage consists of the specification
of suitable revolute angles for each pair of links joined by a joint.
This angle corresponds to a rotation of the form $t_i-h_i$,
$t_i\in\R$, or to the identity $1$ for $t_i=\infty$.

Let $L=(h_1,\dots,h_n)$ be a closed $n$R chain with mobility one. We
denote the links by $o_1,\dots,o_n$, and use the convention that $o_i$
is the link with joint axes $h_i,h_{i+1}$ for $i=1,\dots,n$. We use
$[n]$ as shorthand notation for the set $\{1,\ldots,n\}$. For $i<j \in
[n]$, we define the polynomial
\begin{equation}
  \label{eq:2}
  \begin{gathered}
    F_{i,j} = (t_{i+1}-h_{i+1})(t_{i+2}-h_{i+2})\cdots(t_j-h_j)
    \in \D\H[t_i,\ldots,t_j] \subset \D\H[t_1,\ldots,t_n],
  \end{gathered}
\end{equation}
and the map
\begin{equation}
  \label{eq:3}
  \begin{aligned}
  f_{i,j} \colon K & \to \SE, \\
  (t_1,\ldots,t_n) = \tau & \mapsto
  \begin{cases}
    F_{i,j}(\tau) & \text{if $F_{i,j}(\tau) \neq 0$,} \\
    \displaystyle
    \lim_{\tau'\to \tau} F_{i,j}(\tau') & \text{else.}
  \end{cases}
  \end{aligned}
\end{equation}
The distinction between the polynomial $F_{i,j}$ and the map $f_{i,j}$
is necessary because $F_{i,j}(\tau)$ may vanish at isolated points
$\tau \in K$ (see Corollary~\ref{cor:prod} and Example~\ref{ex:5}),
that is, the evaluation of $F_{i,j}$ at points $\tau \in K$ does not
give a well-defined map into $\P^7$. On the other hand, the map
$f_{i,j}$ is well-defined for all regular points $\tau\in K$. (Thus,
it should actually be defined on the normalization $\normalization(K)$
of $K$, compare Section~3.1.)

Because of the closure condition \eqref{eq:1}, we also have
\begin{equation}
  \label{eq:4}
  f_{i,j}(\tau) =
  \begin{cases}
    G_{i,j}(\tau) & \text{if $G_{i,j} \neq 0$,} \\
    \displaystyle \lim_{\tau'\to\tau} G_{i,j}(\tau') & \text{else,}
  \end{cases}
\end{equation}
where
\begin{gather*}
  G_{i,j} = \prod_{k=1}^{n+i-j} ( \bar{t_{i-k+1}-h_{i-k+1}})
  = (t_i+h_i)(t_{i-1}+h_{i-1})\cdots(t_{j+1}+h_{j+1}).
\end{gather*}
Note that the overline denotes dual quaternion conjugation and
$\overline{h} = -h$ whenever $h$ is of zero scalar part. Note further
that we define the product symbol $\prod$ by the recursion
$\prod_{k=1}^n x_k = (\prod_{k=1}^{n-1} x_k)x_n$ where $x_1,\ldots,x_n$
are elements of a non-commutative ring.

\begin{defn}
  \label{def:coupler}
  The map $f_{i,j}$ defined in \eqref{eq:3} is called the
  \emph{coupling map.} Its image is the \emph{coupling curve $C_{i,j}$.}
\end{defn}

The coupling curve $C_{i,j}$ describes the motion of link $o_j$
relative to link $o_i$. This is the reason for the seemingly strange
index convention in the definition of the coupling map~$f_{i,j}$.

\begin{defn}
  \label{def:3}
  For a sequence $h_i,h_{i+1},\dots,h_j$ of consecutive joints, we
  define the \emph{coupling space} $L_{i,i+1,\dots,j}$ as the linear
  subspace of $\R^8$ generated by all products $h_{k_1}\cdots
  h_{k_s}$, $i\le k_1<\cdots <k_s\le j$. (Here, we view dual
  quaternions as real vectors of dimension eight.) The empty product
  is included, its value is $1$.
\end{defn}

\begin{defn}
  \label{def:4}
  The dimension of the coupling space $L_{i,i+1,\ldots,j}$ will be
  called the \emph{coupling dimension}. We denote it by
  $l_{i,i+1,\ldots,j} = \dim L_{i,i+1,\ldots,j}$.
\end{defn}

Note that Definitions~\ref{def:3}--\ref{def:5} also make sense if we
arrange the consecutive joints in decreasing order with respect to our
chosen linkage representation $(h_1,h_2,\ldots,h_n)$. With this in
mind, we also write $L_{i,i+1,\ldots,j} = L_{i,i-1,\ldots,j}$ or
$l_{i,i+1,\ldots,j} = l_{i,i-1,\ldots,j}$. This consideration also
applies to

\begin{defn}
  \label{def:5}
  For a sequence $h_i,h_{i+1},\dots,h_j$ of consecutive joints, we
  define the \emph{coupling variety $X_{i,i+1,\dots,j}\subset\P^7$} as
  the set of all products $(t_i-h_i)\dots(t_j-h_j)$ with $t_k\in\P^1$
  for $k=i,\dots,j$ or, more precisely, the set of all equivalence
  classes of these products in the projective space.
\end{defn}

The coupling variety is a subset of the projectivization of the
coupling space. The relation between the coupling curve and the
coupling variety is described by the ``coupling equality''
$C_{i,j}=X_{i+1,\dots,j}\cap X_{i-1,\dots,-n+j}$.

We also recall the nomenclature of \cite{HSS,HSS2}: Two rotation
quaternions with the same axes are called \emph{compatible.} Moreover,
two or more lines are called \emph{concurrent} if they are all
parallel or intersect in a common point.

We now prove a theorem that relates the introduced concepts to the
axis geometry of the linkage. We will use it later to show that bonds
have a geometric meaning but it has aspects, which are interesting in
its own right, for example
Theorem~\ref{thm:coup}.d.

\begin{lem}
  \label{lem:field}
  The triple $(L_1,+,\cdot)$ is a field and isomorphic to~$\C$.
\end{lem}
\begin{proof}
  The set $L_1 = \{a+bh_1 \mid a,b \in \R\}$ is closed under addition. Since quaternions in $L_1$
  describe rotations about one fixed axis it is also closed under
  multiplication and inversion. This already implies that $L_1$ is a
  field. Because of $h_1^2 = -1$, $L_1$ is isomorphic to~$\C$.
\end{proof}

\begin{thm} \label{thm:coup}
  If $h_1,h_2,\ldots,h_n$ are rotation quaternions such that $h_i$ and
  $h_{i+1}$ are not compatible for $i=1,\ldots,n-1$, the following
  statements hold true:
  \begin{enumerate}[label=\alph*)]
  \item All coupling dimensions $l_{1,\ldots,i}$ with $1 \le i \le n$
    are even.
  \item The equation $l_{1,2}=4$ always holds. Moreover, $L_{1,2}\subset
    S$ if and only if the axes of $h_1$ and $h_2$ are concurrent.
  \item If $\dim L_{1,2,3} = 4$, then the axes of $h_1$, $h_2$, $h_3$ are
    concurrent.
  \item If $\dim L_{1,2,3} = 6$, then the axes of $h_1$, $h_2$, $h_3$ satisfy
    the Bennett conditions: the normal feet of $h_1$ and $h_3$ on
    $h_2$ coincide and the normal distances $d_{i,i+1}$ and angles
    $\alpha_{i,i+1}$ between consecutive axes are related by
    $d_{12}/\sin\alpha_{12} = d_{23}/\sin\alpha_{23}$.
  \end{enumerate}
\end{thm}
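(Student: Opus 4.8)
The plan is to handle the four parts in increasing order of difficulty, working directly in the algebra $\D\H$.

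\emph{Part (a).} The key observation is that left multiplication by $h_1$ maps the coupling space $L_{1,\dots,i}$ into itself. A spanning product $h_{k_1}\cdots h_{k_s}$ with $k_1>1$ is sent to the spanning product $h_1h_{k_1}\cdots h_{k_s}$, while if $k_1=1$ we use $h_1^2=-1$ to obtain $-h_{k_2}\cdots h_{k_s}$, again a spanning element. Hence $L_{1,\dots,i}$ is a module over the field $L_1\cong\C$ of Lemma~\ref{lem:field}; being a $\C$-vector space, its real dimension is even.

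\emph{Part (b).} Evenness together with the independence of $1,h_1$ gives $l_{1,2}\in\{2,4\}$, and $l_{1,2}=2$ would force $h_2=a+bh_1$; comparing scalar parts yields $a=0$, and the unit norm then forces $h_2=\pm h_1$, i.e.\ $h_1,h_2$ compatible, a contradiction. So $l_{1,2}=4$. For the inclusion $L_{1,2}\subset S$ I would compute the polarization of the quadratic form ``$\eps$-part of $\norm(x)$'' on the basis $1,h_1,h_2,h_1h_2$. Writing $h_i=p_i+\eps q_i$ with $p_i$ a unit pure quaternion and $q_i\perp p_i$, every basis vector lies on $S$ individually, and all mixed pairings vanish identically from $h_i^2=-1$ and $\bar{h_i}=-h_i$ \emph{except} the pairs $\{h_1,h_2\}$ and $\{1,h_1h_2\}$, both of which reduce to a multiple of $p_1\cdot q_2+p_2\cdot q_1$. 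This is the mutual moment of the two axes, which vanishes exactly when they are concurrent.

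\emph{Parts (c), (d), setup.} For these I would pass to the primal projection $\pi\colon\D\H\to\H$, whose kernel is $E$. Then $l_{1,2,3}=\dim\pi(L_{1,2,3})+\dim\!\big(L_{1,2,3}\cap E\big)$, where $\pi(L_{1,2,3})$ is the subalgebra of $\H$ generated by the direction quaternions $p_1,p_2,p_3$. If the three directions are not all parallel this subalgebra is all of $\H$, so $\dim\pi(L_{1,2,3})=4$ and $L_{1,2,3}\cap E=\eps\,\Phi(\ker M)$, where $M$ records the primal parts of the eight spanning products and $\Phi$ sends a primal relation to the corresponding combination of dual parts. Thus $l_{1,2,3}=4+\mathrm{rank}\,\Phi$, giving $l_{1,2,3}=4\iff\Phi=0$ and $l_{1,2,3}=6\iff\mathrm{rank}\,\Phi=2$. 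The all-parallel case is separate: there every spanning product has its dual part orthogonal to the common direction, forcing $l_{1,2,3}\le4$, so this case never yields dimension $6$, and when it yields dimension $4$ the axes are parallel, hence concurrent, so (c) holds trivially.

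\emph{Part (c) finish, part (d), and the main obstacle.} For (c), $\Phi=0$ means the assignment $p_w\mapsto(\text{dual part of the product }w)$ respects every linear relation, hence defines a linear map $D\colon\H\to\H$ with $D(p_i)=q_i$; the Leibniz rule holds because the dual part of $h_ih_j$ is $q_ip_j+p_iq_j$, so $D$ is a derivation of $\H$. Every derivation of $\H$ is inner, $D=[v,\,\cdot\,]$ for a pure quaternion $v$, whence $q_i=[v,p_i]$ for every $i$; this says each axis passes through the single point determined by $v$, i.e.\ the axes are concurrent. For (d) the assignment $p_w\mapsto q_w$ is no longer consistent, and I expect the main obstacle to lie exactly here: extracting from the single condition $\mathrm{rank}\,\Phi=2$ the two precise Bennett relations. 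My plan is to place the axes in a Euclidean normal form (put $h_2$ on the $z$-axis, fix the common normal of $h_1,h_2$, and record $h_3$ by its angle, normal distance, axial offset, and twist), write $\Phi$ as an explicit matrix in these parameters, and show that its rank drops to $2$ precisely when the normal foot of $h_3$ on $h_2$ coincides with that of $h_1$ and $d_{12}/\sin\alpha_{12}=d_{23}/\sin\alpha_{23}$. Organizing this rank computation so that the two geometric conditions emerge cleanly, rather than as an opaque pair of polynomial equations, is the delicate point.
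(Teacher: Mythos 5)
Your parts (a) and (b) are fine: (a) is exactly the paper's argument ($L_{1,\dots,i}$ is an $L_1$-vector space, $L_1\cong\C$), and for (b), which the paper simply cites as well known, your polarization computation reducing the Study condition to the mutual moment $p_1\cdot q_2+p_2\cdot q_1$ is correct. The problems are in (c) and, decisively, in (d).

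In (c) your route (realize $L_{1,2,3}$ as the graph of a map $D\colon\H\to\H$, argue $D$ is a derivation, invoke that all derivations of $\H$ are inner) is genuinely different from the paper's, which shows $h_3=\pm h_1$ would force $L_{1,2,3}$ to be a subalgebra containing two skew lines. Your version is attractive, but the step ``hence $D$ is a derivation of $\H$'' is not justified: you only verify Leibniz on the \emph{ordered} products $p_ip_j$ with $i<j$ that occur in the spanning set. To apply the structure theory you need $D(xy)=D(x)y+xD(y)$ for all products, and already for the reversed pair one computes
\begin{equation*}
  D(p_2)p_1+p_2D(p_1) = -\bigl(q_1p_2+p_1q_2\bigr)-2\bigl(p_1\cdot q_2+p_2\cdot q_1\bigr),
  \qquad D(p_2p_1)=-D(p_1p_2)=-\bigl(q_1p_2+p_1q_2\bigr),
\end{equation*}
so Leibniz for $(p_2,p_1)$ is \emph{equivalent} to the vanishing of the mutual moment of the first two axes --- which is part of what you are trying to prove. (The paper's $L_{i,\dots,j}$ is only multiplicatively closed once the axes are pairwise coplanar, so you cannot get closure for free either.) This may be repairable by exploiting the remaining consistency conditions coming from $h_3$, $h_2h_3$ and $h_1h_2h_3$, but as written the argument is circular at this point.

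The serious gap is (d), which is the substantive claim of the theorem. You do not prove it: you describe a plan (put the axes in Denavit--Hartenberg-type normal form, write $\Phi$ as an explicit parameter matrix, show its rank drops to $2$ exactly under the Bennett conditions) and you yourself flag the execution as ``the delicate point.'' A plan with its hardest step declared open is not a proof. For comparison, the paper avoids any coordinate computation: from $l_{1,2,3}=6$ it extracts a relation $x+yh_2+zh_3=h_2h_3$ with $x,y,z\in L_1$, shows $y^2=-1$ so that $y=-h_1$ after a sign change, and then either exhibits a rotation about $h_2$ carrying $h_1$ to $h_3$ (the case $a=0$) or constructs a fourth half-turn $h_4=-h_1-h_2'-h_3$ satisfying the Bennett closure equation $(t-h_1)(t-h_2')(t-h_3)(t-h_4)\in\R$, whence the Bennett conditions follow from the factorization results of \cite{HSS,HSS2}. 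You would need either to carry out your rank computation in full and show the two geometric conditions actually drop out, or to switch to an identity-based argument of this kind.
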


\begin{proof}
  a) The coupling space $L_{1,\dots,i}$ is closed under multiplication
  with $L_1$ from the left. Hence $L_{1,\dots,i}$ is a vector space
  over the field $L_1$. By Lemma~\ref{lem:field}, $L_1$ is isomorphic
  to $\C$. Hence, the real dimension of $L_{1,\dots,i}$ is even.\par
  b) is well-known \cite[Section~11.2.1]{selig05}.\par
  c) Since both $L_{1,2,3}$ and $L_{1,2}$ have dimension~4, the two
  vectorspaces are equal and $h_3\in L_{1,2}$. Our proof is by
  contradiction. Suppose that $h_1$ and $h_2$ are not concurrent. Then
  the only rotations in the projectivizations are compatible with
  $h_1$ or $h_2$. By assumption, $h_3$ is not compatible with $h_2$,
  hence $h_3=\pm h_1$. Then $L_{1,2,3}$ is closed under multiplication
  by $h_1=\pm h_3$ from the left and from the right. On the other
  hand, no proper subalgebra of $\D\H$ can contain two skew lines,
  hence $h_1$ and $h_2$ are concurrent.
  \par
  d) If $h_3 \in L_{1,2}$, it is either compatible with $h_1$ or $h_2$.
  The latter is excluded by assumption, the former satisfies the
  Bennett conditions. Hence, we can assume $h_3 \notin L_{1,2}$ and the
  vectors $1$, $h_1$, $h_2$, $h_3$, $h_1h_2$ are linearly independent.
  As an $L_1$-vectorspace, $L_{1,2,3}$ is generated by
  $1,h_2,h_3,h_2h_3$. Assume that these vectors form a basis of
  $L_{1,2,3}$. Then $w + xh_2 + yh_3 + zh_2h_3 = 0$ with $w,x,y,z \in
  L_1$ would imply $w = x = y = z = 0$ so that $l_{1,2,3} = 8$. This
  contradicts our assumption. Hence, there is a non-trivial linear
  relation
  \begin{equation}
    \label{eq:5}
    x + yh_2 + zh_3 = h_2h_3
  \end{equation}
  with unique $x,y,z\in L_1$. By multiplying \eqref{eq:5} from the
  right with $h_3$, we obtain $xh_3+yh_2h_3-z=-h_2$. Comparing
  coefficients with \eqref{eq:5} then yields $y^2=-1$, $z=xy$, and
  $x=zy$. We may assume, possibly after replacing $h_1$ by $-h_1$,
  that $y=-h_1$. Then we can also write $x=a+bh_1$ and $z=b-ah_1$ for
  some $a,b\in\R$. If $a = 0$, \eqref{eq:5} becomes
  $(h_2-b)h_3=h_1(b-h_2)$, there is a rotation around $h_2$ that
  transforms $h_1$ to $h_3$ and the claim follows. If $a \neq 0$, we
  set $h_2':=a^{-1}(h_2-b)$ (another rotation about the same axis) and
  find
  \begin{equation*}
    \begin{gathered}
      a(h_1h_2'+h_1h_3+h_2'h_3) =\\
      h_1h_2-bh_1+ah_1h_3+h_2h_3-bh_3 =\\
      h_1h_2-bh_1+ah_1h_3+a+bh_1-h_1h_2+bh_3-ah_1h_3-bh_3 = a.
    \end{gathered}
  \end{equation*}
  It follows that $h_4:=-h_1-h_2'-h_3$ fulfills the two equations
  $h_1+h_2'=\bar{h_4}+\bar{h_3}$, $h_1h_2'=\bar{h_4}\,\bar{h_3}$.
  Hence, the closure equation $(t-h_1)(t-h_2)(t-h_3)(t-h_4) \in \R$ of
  Bennett's mechanism is fulfilled (see \cite{HSS,HSS2}).
\end{proof}

\section{Bonds}
\label{sec:bonds}

In this article's central section we define bonds and introduce the
bond structure (local distance and local joint length). We show how
the bond structure can be used to compute the degree of coupling
curves and derive some algebraic implications of the theory. Towards
the end of this section, we introduce the connection numbers
associated to bonds and use them for drawing bond diagrams. From now
on, we consider closed revolute chains with incompatible consecutive
axes only.

\subsection{Definition of bonds}

Consider a closed chain $L = (h_1,\ldots,h_n)$ of mobility one with
configuration curve $K$. By $K_\C$ we denote its Zariski closure, the
set of all points in $(\P^1_\C)^n$ which satisfy all algebraic
equations that are also satisfied by all points of~$K$. Now we set
\begin{equation}
  \label{eq:6}
  B := \{(t_1,\ldots,t_n) \in K_\C \mid
  (t_1-h_1)(t_2-h_2)\cdots(t_n-h_n) = 0\}.
\end{equation}

\begin{prop}
  We have $\dim(B)=\dim(K)-1$.
\end{prop}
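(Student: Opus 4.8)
The plan is to reduce the vanishing condition defining $B$ to a single scalar equation on the one-dimensional curve $K_\C$. Write $P := (t_1-h_1)(t_2-h_2)\cdots(t_n-h_n)$ for the closure product, regarded in homogeneous coordinates of the factors $(\P^1)^n$ as a multihomogeneous map into $\D\H$ of multidegree $(1,\dots,1)$, each of whose eight real components is a form of that multidegree. Since the paper restricts to mobility one we have $\dim K = 1$, and because Zariski closure preserves dimension, $\dim K_\C = 1$ as well. The first step is to exploit the closure condition \eqref{eq:1}: on $K$ the value of $P$ is a nonzero real number, so all seven real components of $P$ other than the primal scalar part (that is, the primal vectorial part together with the full dual part) vanish identically on $K$. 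These are polynomial identities, hence they persist on the Zariski closure $K_\C$. Consequently, on $K_\C$ the map $P$ collapses to its single surviving component, a form $\lambda$ equal to the primal scalar part of $P$.

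With this reduction, $B = \{P=0\}\cap K_\C$ becomes simply the zero locus $\{\lambda = 0\}$ inside $K_\C$, i.e.\ the vanishing of one form on a curve. It then suffices to show that $\lambda$ does not vanish identically on any irreducible component of $K_\C$: since each such component is one-dimensional and irreducible, the proper closed subset cut out by $\lambda$ on it is finite, and summing over components gives $\dim B = 0 = \dim K - 1$.

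The key step, which I expect to be the main obstacle, is to verify that $\lambda \not\equiv 0$ on every irreducible component $Z$ of $K_\C$. Here I use that $K_\C$ is \emph{by definition} the Zariski closure of the real configuration curve $K$. If some component $Z$ contained no point of $K$, then $K$ would lie in the union of the remaining components, a closed set still containing $K$ and hence its closure $K_\C \supset Z$, which contradicts the irreducibility and distinctness of the components. Therefore every component $Z$ contains some real point $\tau \in K$, and at such a point the closure condition gives $\lambda(\tau) = P(\tau) \in \R \setminus \{0\}$; thus $\lambda$ is nonzero at a point of $Z$ and so cannot vanish identically on $Z$.

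To confirm that $B$ is genuinely zero-dimensional rather than empty (so that the equality $\dim B = \dim K - 1$ holds on the nose), observe that $\lambda$ is a form of positive multidegree on the complete curve $K_\C$; a nonzero section of a line bundle of positive degree on a complete curve must have zeros, so $B \neq \emptyset$. Combining these observations yields $\dim B = \dim K - 1$.
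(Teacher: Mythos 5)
Your proposal is correct and follows essentially the same route as the paper: reduce the defining condition on $K_\C$ to the single primal scalar component of the closure product, rule out identical vanishing on a component via the real points of $K$ (where the closure condition forces that component to be a nonzero real number), and conclude $\dim(B)=\dim(K)-1$. Your explicit line-bundle argument for $B\neq\varnothing$ just spells out what the paper leaves implicit in calling $B$ a hypersurface in the complete curve $K_\C$.
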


\begin{proof}
  The ideal of $B$ is generated by the ideal of $K$ and one additional
  equation, the primal scalar part of
  $(t_1-h_1)(t_2-h_2)\cdots(t_n-h_n)$. Hence $B$ is a hypersurface in
  $K_\C$ and it follows that $\dim(B)\ge\dim(K)-1$. If
  $\dim(B)=\dim(K)$, then there would be a component of $K_\C$ that
  would entirely lie in $B$. But this is impossible because $B$ has no
  real points and $K$ is entirely real.
\end{proof}

The set $B$ is a finite set of conjugate complex points on the
configuration curve's Zariski closure. These points are special in the
sense that they, by defining condition \eqref{eq:6} of bonds, do not
correspond to a valid linkage configuration.

In \cite{HSS3}, we simply defined a bond as a point of $B$. But we
also remarked that this is only valid in ``typical'' cases. Here, we
adopt a more general point of view. It is conceivable that $K_\C$ is
singular at a point of $B$ so that more than one bond lies over this
point. In order to overcome this technical difficulty, we consider the
\emph{normalization $\normalization(K)$} instead of $K_\C$ (see
\cite{Shafarevich:74} Chapter II.5). The normalization
$\normalization(K)$ is a singularity-free curve that serves as
parameter range for $K_\C$. In other words, there exists a surjection
$\nu\colon \normalization(K) \to K_\C$, the \emph{normalization map.}

\begin{defn}
  \label{def:bond}
  Let $\normalization(K)$ be the normalization of the algebraic curve
  $K_\C$, with normalization map $\nu\colon \normalization(K)\to
  K_\C$. A point $\beta \in \normalization(K)$ is called a \emph{bond}
  if $\nu(\beta) \in B$.
\end{defn}

We mention that it is usually possible to think of a bond as a point
$\beta \in B$. The concept of normalization is only needed if $K_\C$ is
singular at $\beta$ -- a situation we will not encounter in this
paper.

In the following, we denote the standard basis of the dual quaternions
$\D\H$ by $(1,\qi,\qj,\qk$, $\eps,\eps\qi,\eps\qj,\eps\qk)$ and the
imaginary unit in the field of complex numbers $\C$ by $\ci$. Often,
complex numbers are embedded into the quaternions by identifying $\ci$
with $\qi$. In this paper, we do not do this. \emph{It is crucial to
  distinguish between the imaginary unit $\ci$ and the quaternion
  $\qi$.} We will, for example, encounter expressions like $\ci -
\qi$. This is a quaternion with complex coefficients and different
from zero.

As a first example, we compute the bonds of a Bennett linkage. (The
source code for computing the following examples can be found %
on the accompanying web-site %
\url{http://geometrie.uibk.ac.at/schroecker/bonds/}.)

\begin{example}[Bennett linkage]
  \label{ex:1}
  Consider the linkage $L = (h_1,h_2,h_3,h_4)$ with
  \begin{equation*}
    \begin{aligned}
    h_1 &= \qi,\\
    h_2 &= 9\eps\qi + \qj - 9\eps\qk,\\
    h_3 &= -(\tfrac{1}{3}+4\eps)\qi - (\tfrac{2}{3} - 4\eps)\qj + (\tfrac{2}{3}+2\eps)\qk,\\
    h_4 &= (\tfrac{2}{3}+5\eps)\qi + (\tfrac{1}{3}+4\eps)\qj + (\tfrac{2}{3}-7\eps)\qk.
    \end{aligned}
  \end{equation*}
  From the closure condition $(t_1-h_1)(t_2-h_2)(t_3-h_3)(t_4-h_4) \in
  \R$ we can compute the parametrized representation
  \begin{equation}
    \label{eq:7}
    t_1 = t-1,\
    t_2 = t,\
    t_3 = t-1,\
    t_4 = -t,
    \quad t \in \P^1
  \end{equation}
  of the configuration curve. It is, indeed, of dimension one and $L$
  is a flexible closed 4R chain. It is well-known that any such
  linkage is either planar, spherical or a Bennett linkage
  \cite[Chapter~10, Section~5]{hunt78}. We have
  \begin{equation}
    \label{eq:8}
    \begin{gathered}
      (t-1-h_1)(t-h_2)(t-1-h_3)(-t-h_4)=-(t^2+1)(t^2-2t+2).
    \end{gathered}
  \end{equation}
  The bonds can be computed by solving
  $(t_1-h_1)(t_2-h_2)(t_3-h_3)(t_4-h_4) = 0$. This means, that we have
  to find the zeros of \eqref{eq:8}. They are $t = \pm \ci$ and $t = 1
  \pm \ci$ so that the bond set $B$ consists of the four points
  \begin{equation}
    \label{eq:9}
    (t_1,t_2,t_3,t_4) = (\pm\ci, 1\pm\ci, \pm\ci, -1\mp\ci),\quad
    (t_1,t_2,t_3,t_4) = (-1\pm\ci, \pm\ci, -1\pm\ci, \mp\ci).
  \end{equation}
  We observe that every bond of \eqref{eq:9} has two entries equal to
  $\ci$ or $-\ci$. As next theorem shows, this is no coincidence
  but a typical property of bonds.
  \hfill\eoex
\end{example}

\begin{thm}
  \label{thm:i}
  For a bond $\beta \in \nu^{-1}(t_1,\ldots,t_n)$ there exist indices
  $i,j \in [n]$, $i < j$, such that $t_i^2 + 1 = t_j^2 + 1 = 0$.
\end{thm}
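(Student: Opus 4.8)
The plan is to analyze the algebraic meaning of the vanishing condition $(t_1-h_1)(t_2-h_2)\cdots(t_n-h_n)=0$ over the complex dual quaternions, and to show that such a product can only vanish if at least two of its factors are individually ``degenerate'' in a precise sense. The key observation I would exploit is that $\D\H\otimes\C$ is no longer a division algebra: a nonzero factor $t_i-h_i$ can fail to be invertible. Since each $h_i$ satisfies $h_i^2=-1$, the norm of $t_i-h_i$ is $\norm(t_i-h_i)=(t_i-h_i)(t_i+h_i)=t_i^2-h_i^2=t_i^2+1$ (using that $h_i$ has zero scalar part, so $\bar h_i=-h_i$). Thus the factor $t_i-h_i$ is a zero divisor in the complexified algebra precisely when $t_i^2+1=0$, i.e.\ $t_i=\pm\ci$. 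This identifies the indices singled out in the statement: the ``degenerate joints'' are exactly those with $t_i=\pm\ci$.

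First I would establish that if a product of factors equals zero in $\D\H\otimes\C$, then at least one factor must have norm zero. Taking the dual-quaternion norm is multiplicative, so $\norm\bigl(\prod_i(t_i-h_i)\bigr)=\prod_i(t_i^2+1)$, and this product must vanish if the whole expression is zero; hence some $t_i^2+1=0$, giving one index $i$ with $t_i=\pm\ci$. This yields the existence of one degenerate joint essentially for free. The substance of the theorem is upgrading ``at least one'' to ``at least two.''

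Next I would argue for the second degenerate index. The natural strategy is to factor out the known zero divisor. Suppose $t_i=\pm\ci$ is the degenerate factor; then $t_i-h_i=\pm\ci-h_i$, and because $h_i^2=-1$ one checks that $(\pm\ci-h_i)$ has a one-sided annihilator structure governed by the idempotents $\tfrac12(1\mp\ci h_i)$. I would use these idempotents to split the product: multiplying the closure/vanishing relation on the appropriate side and projecting onto the kernel of $\pm\ci-h_i$, one reduces the vanishing of the full product to the vanishing of a shorter product of the remaining factors (with the degenerate factor replaced by a rank-condition). Re-applying the norm argument (or the same idempotent decomposition) to this reduced product forces a second index $j\neq i$ with $t_j^2+1=0$. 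The reason a single degenerate factor cannot by itself annihilate the whole product is that the complementary product of the genuinely invertible factors $t_k-h_k$ (those with $t_k^2+1\neq0$) is a unit, and a single rank-one-deficient factor multiplied by units cannot give zero unless a second rank deficiency occurs on the opposite side; this parity/rank bookkeeping is where the real content lies.

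I expect the main obstacle to be the passage from ``one degenerate factor'' to ``two,'' because a naive norm count only detects that the multiset of degenerate indices is nonempty, not that it has size at least two. The subtlety is that over $\C$ a single zero divisor $\pm\ci-h_i$ already annihilates a one-(complex-)dimensional left ideal, so one must carefully track \emph{which} side of the product is being killed and verify that the surviving factors, being units, transport the kernel without collapsing it further—so annihilation of the full product genuinely requires a matching degeneracy from the other end. The cleanest route is probably to represent $\D\H\otimes\C$ (or its primal part $\H\otimes\C\cong M_2(\C)$) by $2\times2$ complex matrices, whereupon each $t_k-h_k$ becomes a matrix of determinant $t_k^2+1$; the product vanishes iff the matrix product is the zero matrix, and a product of $2\times2$ matrices is zero only if the ranks drop appropriately, which for matrices that are each at most rank-deficient by one forces at least two singular factors. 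I would handle the dual part by treating $\eps$ as a nilpotent perturbation and checking that it does not reduce the number of required degeneracies.
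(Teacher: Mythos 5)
Your proposal is correct, and its first step (multiplicativity of the norm, $\norm(t_k-h_k)=t_k^2+1$, hence $\prod_k(t_k^2+1)=0$) is exactly the paper's opening move. For the harder step --- upgrading one degenerate index to two --- you take a genuinely different route. The paper stays entirely inside the quaternion algebra: it multiplies the bond equation on the left by $t_1+h_1,\dots,t_{i-1}+h_{i-1}$ and on the right by $t_n+h_n,\dots,t_{i+1}+h_{i+1}$, so that every factor except the $i$-th pairs with its conjugate to give the scalar $t_k^2+1$; the equation collapses to $(t_i-h_i)\prod_{k\neq i}(t_k^2+1)=0$, and since $t_i-h_i\neq 0$ the scalar product must vanish, yielding $j\neq i$ (and $j>i$ by minimality of $i$). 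Your route instead passes to the representation $\H\otimes\C\cong M_2(\C)$ and argues by rank: each factor has determinant $t_k^2+1$, each is nonzero hence at worst rank~$1$, and a product containing only one singular factor has rank exactly~$1$ and so cannot vanish. This is sound, and morally it is the same mechanism (multiplying by $t_k+h_k$ is multiplying by the adjugate), but the paper's version buys a completely elementary, representation-free, one-line computation, while yours buys a conceptual picture of \emph{why} two degeneracies are forced (kernel/image matching transported by units) that the paper only hints at. Two small points to tighten in your write-up: the middle paragraph about idempotents and ``projecting onto the kernel'' is vaguer than needed and should be dropped in favour of the matrix argument you end with; and the dual part is handled more cheaply than a nilpotent-perturbation analysis --- the primal part of a product of dual quaternions is the product of the primal parts, so vanishing of the full product already forces vanishing of the primal product $\prod_k(t_k-p_k)$ in $M_2(\C)$, where your rank count applies verbatim. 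You do need, as the paper does, the observation that $t_i-h_i$ (equivalently $t_i-p_i$) is never zero, which holds because $p_i$ is a nonscalar unit quaternion.
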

\begin{proof}
  Observe at first that for any $k \in [n]$ the equality
  \begin{equation*}
    \norm(t_k-h_k) = (t_k-h_k)(\bar{t_k-h_k}) = (t_k-h_k)(t_k+h_k) = t_k^2 + 1
  \end{equation*}
  holds. ($\norm(h)=h\bar{h}$ is the norm of a dual quaternion.)
  Taking the norm on both side of the defining condition \eqref{eq:6}
  of bonds, we obtain
  \begin{equation}
    \label{eq:10}
    0 = \prod_{k=1}^n (t_k-h_k) \prod_{k=1}^n (t_{n+1-k}+h_{n+1-k})
      = \prod_{k=1}^n (t_k^2+1).
  \end{equation}
  We conclude that $t_i^2 + 1 = 0$ for at least one index $i \in [n]$
  and we assume that $i$ is the minimal index with this property. In
  order to show existence of a second index $j \in [n]$, $i < j$ with
  $t_j^2 + 1 = 0$, we successively multiply the bond equation
  \eqref{eq:6} with $t_n+h_n$, \ldots, $t_{i+1}+h_{i+1}$ from the right
  and with $t_1+h_1$, \ldots, $t_{i-1}+h_{i-1}$ from the left. The
  result is
  \begin{gather*}
    0 =
    \prod_{k=1}^{i-1} (t_k+h_k) \prod_{k=1}^n (t_k-h_k) \prod_{k=1}^{n-i} (t_{n+1-k}+h_{n+1-k}) =
    (t_i-h_i) \prod_{k \neq i}(t_k^2+1).
  \end{gather*}
  Now the claim follows because $t_i-h_i$ never vanishes.
\end{proof}

\begin{defn}
  \label{def:typical}
  We call a bond $\beta = \nu^{-1}(t_1,\ldots,t_n)$ \emph{typical} if
  there are precisely two indices $i,j\in[n]$, $i < j$ such that
  $t_i^2+1=t_j^2+1=0$.
\end{defn}

Theorem~\ref{thm:i} is important for two reasons. First of all, it
gives us necessary conditions that are useful for the actual
computation of typical bonds. Secondly, it is a further manifestation
of the mentioned discrete properties of bonds: For a typical bond
$\beta$, the two links $h_i$, $h_j$ with $t_i^2 + 1 = t_j^2 + 1 = 0$
play a special role. We say that the bond ``connects'' $h_i$ and
$h_j$. However, this concept requires a more refined elaboration as we
also have to take into account non-typical cases and higher connection
multiplicities. For this reasons, the precise definition of a
connection number between two joints is necessary. This needs more
preparation work and will be deferred until
Section~\ref{sec:bond-diagrams}.

\begin{cor}
  \label{cor:prod}
  For a typical bond $\beta = \nu^{-1}(t_1,\ldots,t_n)$ with
  $t_i^2+1=t_j^2+1=0$ and $i < j$, the equalities
  \begin{equation}
    \label{eq:11}
    F_{i-1,j}(t_1,\ldots,t_n) = F_{j-1,n+i}(t_1,\ldots,t_n) = 0
  \end{equation}
  hold.
\end{cor}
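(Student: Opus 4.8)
The plan is to exploit the fact that, away from the two connected joints, every factor is invertible. As computed in the proof of Theorem~\ref{thm:i}, one has $\norm(t_k-h_k)=t_k^2+1$, and typicality of $\beta$ guarantees $t_k^2+1\neq 0$ for every $k\in[n]\setminus\{i,j\}$. For such $k$ the scalar $t_k^2+1$ is a nonzero central complex number, so $t_k-h_k$ is invertible in the algebra of dual quaternions with complex coefficients, with two-sided inverse $(t_k+h_k)/(t_k^2+1)$. Only the two factors $t_i-h_i$ and $t_j-h_j$ are zero divisors.

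The first equality is immediate. I would rewrite the bond equation \eqref{eq:6} as
\[
  \Bigl(\prod_{k=1}^{i-1}(t_k-h_k)\Bigr)\,F_{i-1,j}\,\Bigl(\prod_{k=j+1}^{n}(t_k-h_k)\Bigr)=0 .
\]
Both outer products involve only indices different from $i$ and $j$ and are therefore invertible by the observation above (empty products being $1$). Multiplying on the left and right by their inverses yields $F_{i-1,j}=0$.

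The second equality is less direct, since $F_{j-1,n+i}$ is the \emph{complementary} arc running from $j$ around through $n,1,\dots$ back to $i$, and is not a subword of the product in \eqref{eq:6}. To reach it I would pass to the cyclically rotated full product
\[
  Q:=(t_{i+1}-h_{i+1})(t_{i+2}-h_{i+2})\cdots(t_n-h_n)(t_1-h_1)\cdots(t_i-h_i),
\]
which factors as $Q=M\cdot F_{j-1,n+i}$ with $M:=(t_{i+1}-h_{i+1})\cdots(t_{j-1}-h_{j-1})$ invertible. It then suffices to show $Q=0$ at the bond. On the real configuration curve $K$ every factor is invertible, and moving the prefix $u:=(t_1-h_1)\cdots(t_i-h_i)$ from the front of the closure product to the back amounts to conjugating the \emph{central} real scalar $(t_1-h_1)\cdots(t_n-h_n)$ by $u$; hence $Q=u^{-1}\bigl((t_1-h_1)\cdots(t_n-h_n)\bigr)u=(t_1-h_1)\cdots(t_n-h_n)$ on all of $K$. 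Since $Q$ and the closure product are polynomial maps agreeing on the Zariski-dense subset $K\subset K_\C$, their difference vanishes on $K_\C$, in particular at $\nu(\beta)$. There the closure product is $0$ by \eqref{eq:6}, so $Q=0$, and cancelling the invertible $M$ gives $F_{j-1,n+i}=0$.

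The hard part will be this last transfer: at the bond the factors $t_i-h_i$ and $t_j-h_j$ are zero divisors, so the conjugation identity $Q=(t_1-h_1)\cdots(t_n-h_n)$ cannot be argued pointwise at $\nu(\beta)$ itself. The Zariski-closure argument is exactly what bridges this gap, propagating an identity valid only at the invertible (real) configurations to the degenerate bond. Throughout, typicality of $\beta$ is essential, as it is precisely what renders the complementary products in the two factorizations invertible.
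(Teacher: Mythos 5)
Your proof of the first equality is the same as the paper's: the paper multiplies the bond equation \eqref{eq:6} from the left by $t_1+h_1,\dots,t_{i-1}+h_{i-1}$ and from the right by $t_{j+1}+h_{j+1},\dots,t_n+h_n$, turning the outer factors into the nonzero scalar $\prod_{k\notin\{i,\dots,j\}}(t_k^2+1)$ and leaving $F_{i-1,j}=0$; multiplying instead by the normalized two-sided inverses $(t_k+h_k)/(t_k^2+1)$ is the identical computation.

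For the second equality the paper only says it ``can be seen similarly,'' and here your treatment is more careful than the printed proof. You correctly observe that $F_{j-1,n+i}$ is not a contiguous subword of the product in \eqref{eq:6}, so the peeling argument does not apply verbatim: one must first know that the cyclically shifted full product vanishes at the bond, and this does not follow from $F_{i-1,j}=0$ alone, since $t_i-h_i$ and $t_j-h_j$ are zero divisors. Your justification --- on $K$ the shifted product $Q$ equals the central real scalar $(t_1-h_1)\cdots(t_n-h_n)$ by conjugation with an invertible prefix, the two polynomial maps therefore agree on the Zariski closure $K_\C$, and hence $Q$ vanishes at $\nu(\beta)$ --- supplies exactly the missing step, after which cancelling the invertible factor $M$ gives $F_{j-1,n+i}=0$. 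So the proposal is correct; it follows the paper's route for the first half and fills in, via the cyclic shift and Zariski density, the argument that the paper's laconic ``similarly'' leaves implicit.
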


\begin{proof}
  Once more, we consider the bond equation \eqref{eq:6}. We multiply
  it from the left with $t_1+h_1,\ldots,t_{i-1}+h_{i-1}$ and from the
  right with $t_{j+1}+h_{j+1},\ldots,t_n+h_n$ to obtain
  \begin{gather*}
    0 = \prod_{k=1}^{i-1} (t_k+h_k) \prod_{k=1}^n (t_k-h_k) \prod_{k=j+1}^n (t_k+h_k)
      = \prod_{k \notin \{i,\ldots,j\}} (t_k^2+1) \prod_{k=i}^j (t_k-h_k).
  \end{gather*}
  Because the first product on the right is different from zero, the
  second vanishes. The second equality can be seen similarly.
\end{proof}

The reader is invited to verify Corollary~\ref{cor:prod} with the data
of Example~\ref{ex:1}.

Before proceeding with our study of bonds, we present two further
examples (spherical and planar four-bar linkage) that illustrate
special situations that can occur: Different bonds may have the same
indices $i < j \in [n]$ such that $t_i^2+1=t_j^2+1=0$ and, for a given
bond, there might exist more than two indices $i < j \in [n]$ with
this property.

\begin{example}[Spherical four-bar linkage]
  \label{ex:2}
  We consider the spherical four-bar linkage $L = (h_1,h_2,h_3,h_4)$
  given by
  \begin{equation*}
    h_1 = \qi,\quad
    h_2 = \qj,\quad
    h_3 = \qk,\quad
    h_4 = \frac{3}{5}\qi + \frac{4}{5}\qj.
  \end{equation*}
  The configuration curve admits the parametrization
  \begin{equation}
    \label{eq:12}
    \begin{gathered}
      t_1 = \frac{5-5t^2 + w}{6t},\quad
      t_2 = \frac{-5t^2-5 + w}{8t},\quad
      t_3 = \frac{25t^2-7 - 5w}{24},\quad
      t_4 = t
    \end{gathered}
  \end{equation}
  where $w = \pm\sqrt{25t^4-14t^2+25}$. The bonds are
  \begin{gather*}
    (\mp 3 \ci, \mp \ci, -3, \pm\ci),\quad
    (\mp\tfrac{1}{3}\ci, \pm\ci, \tfrac{1}{3}, \pm\ci),\quad
    (\mp\ci, -1, \pm\ci, \tfrac{4}{5} \pm \tfrac{3}{5} \ci),\quad
    (\mp\ci, 1, \mp\ci, -\tfrac{4}{5} \pm \tfrac{3}{5} \ci).
  \end{gather*}
  Thus, we have two pairs of conjugate complex bonds with
  $t_1^2+1=t_3^2+1=0$ and two pairs of conjugate complex bonds with
  $t_2^2+1=t_4^2+1=0$.
  \hfill\eoex
\end{example}

\begin{example}[Planar four-bar linkage]
  \label{ex:3}
  The configuration curve of the planar four-bar linkage given by
  \begin{equation*}
    h_1 = \eps\qi + \qk,\quad
    h_2 = \eps\qj + \qk,\quad
    h_3 = \qk,\quad
    h_4 = \eps\qi + 2\eps\qj + \qk
  \end{equation*}
  can be parametrized by
  \begin{equation}
    \label{eq:13}
    \begin{gathered}
      t_1 = \frac{-t^2 + 2t + 5 - w}{2(t+3)},\quad
      t_2 = \frac{t^2 + 1 + w}{4(2-t)},\quad
      t_3 = \frac{t^2-4t+1 + w}{4(t-1)},\quad
      t_4 = t
    \end{gathered}
  \end{equation}
  where $w = \pm\sqrt{t^4-8t^3+2t^2+56t-47}$. The bonds are
  \begin{gather*}
    (\pm\ci, -2 \pm \ci, \mp\ci, 4 \mp \ci),\quad
    (2 \pm \ci, \mp\ci, -1 \pm 2\ci, \pm\ci),\quad
    (\pm\ci, \mp\ci, \pm\ci, \mp\ci).
  \end{gather*}
  The special thing here is the existence of two non-typical bonds.
  For them, Corollary~\ref{cor:prod} cannot be applied. Nonetheless,
  we observe that
  \begin{equation*}
    \begin{aligned}
      (t_1 - h_1)(t_2 - h_2)(t_3 - h_3) &= (t_2 - h_2)(t_3 - h_3)(t_4 - h_4) = \\
      (t_3 - h_3)(t_4 - h_4)(t_1 - h_1) &= (t_4 - h_4)(t_1 - h_1)(t_2 - h_2) = 0
    \end{aligned}
  \end{equation*}
  holds for $(t_1,t_2,t_3,t_4) = (\pm\ci, \mp\ci, \pm\ci, \mp\ci)$.
  \hfill\eoex
\end{example}

\begin{rem}
  So far, we silently ignored the possibility of a bond with
  coordinate $\infty$. Actually, no such bonds occur in our examples
  but this has to be checked carefully. Linkages with ``bonds at
  infinity'' do exist.
\end{rem}

Example~\ref{ex:3} is an indication that the vanishing of coupling
maps as stated in Corollary~\ref{cor:prod} for typical bonds, is a
more relevant property than existence of indices $i<j\in[n]$ with
$t_i^2+1=t_j^2+1=0$, as stated by Theorem~\ref{thm:i}. In the
following, we will elaborate this concept in more detail.

\subsection{Local distances and joint lengths}
\label{sec:local-distances}

Now we are going to define local distances and joint lengths of a
linkage. These are algebraic notions related to a single bond. In
Section~\ref{sec:distances} we will define (non-local) distances and
joint lengths as sum over all local distances and joint lengths,
respectively.

The definition of local distances requires the concept of the
vanishing order of a function $f\colon K_\C \to \P^7$ at a bond
$\beta$. Consider an arbitrary homogeneous quadratic form $F\colon
\R^8 \to \C$. The image $F(x)$ of a vector $x \in \R^8$ is obtained by
plugging the coordinates of $x$ into a homogeneous quadratic
polynomial. The function $F$ is not necessarily well-defined on $\P^7$
but the vanishing order $\ord_\beta(F(f))$ of $F \circ f$ at $\beta$
\cite[see][p.~96]{danilov98} is well-defined. Note that
$\ord_\beta(F(f))=0$ if $F(f(\beta)) \neq 0$. In this article, we will
use the homogeneous quadratic form $Q$ which maps $x =
(x_0,\ldots,x_7) \in \R^8$ to $Q(x) = x_0^2+x_1^2+x_2^2+x_3^2$ (the
primal part of $\norm(x)$).

\begin{defn}
  \label{def:local-distance}
  For a bond $\beta \in \normalization(K)$ and a pair $(i,j)$ of
  links, the \emph{local distance} is defined as $ d_\beta(i,j) :=
  \frac{1}{2}\ord_\beta Q(f_{i,j})$ where $f_{i,j}$ is the coupling
  map of Definition~\ref{def:coupler}. The \emph{local distance matrix
    $D_\beta$} is the matrix with entries $d_\beta(i,j)$. The
  \emph{local joint length} is defined as $b_\beta(i) :=
  d_\beta(i-1,i) = \frac{1}{2}\ord_\beta Q(t_i-h_i)$.
\end{defn}

\begin{rem}
  Definition~\ref{def:local-distance} relates bond theory to a
  familiar concept of theoretical kinematics. If $d_\beta(i,j)$ is
  positive, the image of the bond $\beta$ under the coupling map
  $f_{i,j}$ is a point $x = (x_0,\ldots,x_7) \in C_{i,j}$ such that
  $x_0^2 + x_1^2 + x_2^2 + x_3^2 = 0$. This equation describes a
  quadratic cone $G$ whose vertex space is the exceptional 3-plane
  $E$. Intersections of the coupling curve with $E$ have been
  considered before (e.g. in \cite[Chapter~11]{selig05} and
  \cite{husty10}), this article shows that it is even more interesting
  to study the intersection points with~$G$.
\end{rem}

\begin{example}
  \label{ex:4}
  The local distance matrices for the Bennett linkage of
  Example~\ref{ex:1} are
  \begin{equation}
    \label{eq:14}
    D_{\beta'} = \frac{1}{2}
    \begin{pmatrix}
      0   & 0   & 1 & 1 \\
      0   & 0   & 1 & 1 \\
      1 & 1 & 0   & 0   \\
      1 & 1 & 0   & 0
    \end{pmatrix},
    \quad
    D_{\beta''} = \frac{1}{2}
    \begin{pmatrix}
       0   & 1 & 1 & 0   \\
       1 & 0   & 0   & 1 \\
       1 & 0   & 0   & 1 \\
       0   & 1 & 1 & 0
    \end{pmatrix}
  \end{equation}
  with $\beta' = (\pm\ci, 1\pm\ci, \pm\ci -1\mp\ci)$ and $\beta'' =
  (-1\pm\ci, \pm\ci, -1\pm\ci, \mp\ci)$. For their computation, we can
  use the parametrized representation \eqref{eq:7}. We find, for
  example,
  \begin{multline*}
    F_{1,3}(t) = (t-h_2)(t-1-h_3) = t^2-t+5\eps+\tfrac{2}{3} \\
    + (t(\tfrac{1}{3}-5\eps) + \tfrac{2}{3}+ 5\eps)\qi +
    (-t(\tfrac{1}{3}+4\eps) + 1 - 3\eps)\qj \\
    +(-t(\tfrac{2}{3}+7\eps) + \tfrac{1}{3} - 11\eps)\qk.
  \end{multline*}
  The bond $\beta = (-1-\ci,-\ci,-1-\ci,\ci)$ belongs to the parameter
  value $t = \ci$. Because $F_{1,3}(\ci) \neq 0$, we can compute the
  local distance as half the vanishing order of
  \begin{equation*}
    Q(F_{1,3}(t)) = (1+t^2)(t^2-2t+2)
  \end{equation*}
  at $t = \ci$, that is, $d_\beta(1,3) = \frac{1}{2}$.
  \hfill\eoex
\end{example}

\begin{example}
  \label{ex:5}
  The local distances for the bonds of Example~\ref{ex:3} (planar
  four-bar linkage) are
  \begin{equation}
    \label{eq:15}
    \begin{gathered}
      D_{\beta'} = \frac{1}{2}
      \begin{pmatrix}
        0 & 1 & 1 & 0 \\
        1 & 0 & 0 & 1 \\
        1 & 0 & 0 & 1 \\
        0 & 1 & 1 & 0
      \end{pmatrix},
      \quad D_{\beta''} = \frac{1}{2}
      \begin{pmatrix}
        0 & 0 & 1 & 1 \\
        0 & 0 & 1 & 1 \\
        1 & 1 & 0 & 0 \\
        1 & 1 & 0 & 0
      \end{pmatrix},
      \quad D_{\beta'''} = \frac{1}{2}
      \begin{pmatrix}
        0 & 1 & 2 & 1 \\
        1 & 0 & 1 & 2 \\
        2 & 1 & 0 & 1 \\
        1 & 2 & 1 & 0
      \end{pmatrix},
    \end{gathered}
  \end{equation}
  where $\beta' = (2\pm\ci, \mp\ci, -1\pm2\ci, \pm\ci)$, $\beta'' =
  (\pm\ci, -2\pm\ci, \mp\ci, 4\mp\ci)$, and $\beta''' =
  (\pm\ci,\mp\ci,\pm\ci,\mp\ci)$. We introduce a new aspect and
  discuss the computation of $d_\beta(1,4)$ for the non-typical bond
  $\beta=(-\ci,\ci,-\ci,\ci)$. It corresponds to $t = \ci$ and the
  positive square root $w$ in the parametrized equation \eqref{eq:13}.
  Because of $f_{1,4} = f^{-1}_{4,1}$ and because inversion is, up to
  scalar multiplication, equal to conjugation, we clearly have
  $d_\beta(1,4) = d_\beta(4,1)$. The latter local distance is easily
  computed to be $d_\beta(4,1) = \frac{1}{2}\ord_\beta Q(t_1-h_1) =
  \frac{1}{2}$.

  However, when we insert the parametrized equation \eqref{eq:13} into
  the product $F_{1,4} = (t_2-h_2)(t_3-h_3)(t_4-h_4)$, we see that it
  vanishes at $t = \ci$. Thus, the parametrization \eqref{eq:13} does
  not give a well-defined map into $\P^7$ at the bond $\beta$ and we
  have to compute the local distance as
  \begin{equation}
    \label{eq:16}
    d_\beta(1,4) = \frac{1}{2}\ord_\beta Q(F_{1,4}) - \min\ord_\beta(F_{1,4}) = \frac{1}{2}
  \end{equation}
  where $\min\ord_\beta(F_{1,4})$ denotes the minimal vanishing order
  of the coordinates of $F_{1,4}$ at $\beta$. This vanishing order
  enters with multiplicity two in the norm, so that the factor
  $\frac{1}{2}$ can be omitted. The actual evaluation of \eqref{eq:16}
  by means of the parametrized equation \eqref{eq:13} poses no
  problems.
  \hfill\eoex
\end{example}

Below, we present an alternative method for computing local distances
using the products $F_{i,j}$ as functions from $\normalization(K)$ to
$\D\H$. As a consequence, we are able to derive a couple of
interesting properties of the local distance function
(Theorem~\ref{thm:local}).

\begin{lem}
  \label{lem:dbeta}
  The local distance $d_\beta(i,j)$ can be computed as
  \begin{equation}
    \label{eq:17}
    d_\beta(i,j)=\sum_{k=i+1}^j b_\beta(k) - v_\beta(i,j)
  \end{equation}
  where $v_\beta(i,j) = \min\ord_\beta(F_{i,j})$ is the minimal
  vanishing order of the coordinates of $F_{i,j}$ at~$\beta$.
\end{lem}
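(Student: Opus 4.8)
The plan is to reduce everything to two facts: the dual quaternion norm is multiplicative, and the quadratic form $Q$ is the primal part of the norm $\norm$.

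First I would compute $Q(F_{i,j})$ directly. Since $\overline{ab}=\bar b\,\bar a$ and $\norm(b)$ is central, the norm is multiplicative, so $\norm(F_{i,j})=\prod_{k=i+1}^{j}\norm(t_k-h_k)$. As in the proof of Theorem~\ref{thm:i}, each factor equals $\norm(t_k-h_k)=(t_k-h_k)(t_k+h_k)=t_k^2+1$, a genuine scalar with vanishing dual part. Hence $\norm(F_{i,j})=\prod_{k=i+1}^{j}(t_k^2+1)$ has zero dual part, and because $Q(x)$ is by definition the primal part of $\norm(x)$, its primal part is the whole expression: $Q(F_{i,j})=\prod_{k=i+1}^{j}(t_k^2+1)$.

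Next I would take vanishing orders at the bond $\beta$. Since $\beta$ lies on the smooth curve $\normalization(K)$, the order $\ord_\beta$ is a discrete valuation and is additive over products, so $\ord_\beta Q(F_{i,j})=\sum_{k=i+1}^{j}\ord_\beta(t_k^2+1)$. By the definition of the local joint length, $b_\beta(k)=\tfrac12\ord_\beta Q(t_k-h_k)=\tfrac12\ord_\beta(t_k^2+1)$, which gives $\ord_\beta Q(F_{i,j})=2\sum_{k=i+1}^{j}b_\beta(k)$.

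Finally I would pass from the product $F_{i,j}$ to the projective coupling map $f_{i,j}$. On the smooth curve $\normalization(K)$ the rational map to $\P^7$ defined by $F_{i,j}$ extends to a morphism whose local representative at $\beta$ has minimal coordinate order zero; by the limit in \eqref{eq:3} this representative is exactly $f_{i,j}$, obtained from $F_{i,j}$ by dividing out a local factor $g$ with $\ord_\beta(g)=v_\beta(i,j)=\min\ord_\beta(F_{i,j})$. As $Q$ is homogeneous of degree two, $Q(F_{i,j})=g^2\,Q(f_{i,j})$, so $\ord_\beta Q(f_{i,j})=\ord_\beta Q(F_{i,j})-2v_\beta(i,j)$. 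Halving and substituting the previous step yields $d_\beta(i,j)=\sum_{k=i+1}^{j}b_\beta(k)-v_\beta(i,j)$, as claimed. The one delicate point, and the main obstacle, is this last bookkeeping: one must argue carefully that replacing the possibly vanishing product $F_{i,j}$ by the well-defined map $f_{i,j}$ lowers the coordinate order by precisely $v_\beta(i,j)$, hence the order of the degree-two form $Q$ by $2v_\beta(i,j)$. This is exactly what the smoothness of the normalization guarantees, whereas the multiplicativity of the norm in the earlier steps is routine.
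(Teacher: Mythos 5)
Your proof is correct and follows essentially the same route as the paper's: multiplicativity of the (primal part of the) norm gives $\ord_\beta Q(F_{i,j})=2\sum_k b_\beta(k)$, and writing $F_{i,j}=u^{m}f_{i,j}$ with $m=v_\beta(i,j)$ for a local parameter $u$ on the smooth curve $\normalization(K)$ gives the correction term $-2v_\beta(i,j)$ after applying the degree-two form $Q$. You merely spell out in more detail the step the paper states tersely, namely that passing from $F_{i,j}$ to the morphism $f_{i,j}$ divides out exactly a factor of order $v_\beta(i,j)$.
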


\begin{proof}
  If $v_\beta(i,j) = 0$, then the product of $F_{k-1,k}$ for $k
  = i+1,\ldots,j$ does not vanish at $\beta$, and gives $F_{i,j}$. The
  primal part of the norm is multiplicative, and this implies the
  equation. In the general case, the product is equal to
  $u^mf_{i,j}$ for some local parameter $u$ at $\beta$ and $m =
  v_\beta(i,j)$, and this gives precisely the correction stated in the
  equation.
\end{proof}

\begin{example}
  \label{ex:6}
  We continue Example~\ref{ex:4} and compute $d_\beta(1,3)$ at $\beta
  = (-1-\ci,-\ci,-1-\ci,\ci)$ also by means of Lemma~\ref{lem:dbeta}.
  From the matrices in \eqref{eq:14} we read off:
  \begin{equation*}
    b_\beta(2) = d_\beta(1,2) = \frac{1}{2},\quad
    b_\beta(3) = d_\beta(2,3) = 0.
  \end{equation*}
It is easy to verify that
  $F_{1,3}(\ci) \neq 0$ so that $\min\ord_\beta(f_{1,3}) = 0$ and
  \begin{equation*}
    d_\beta(1,3) = b_\beta(2) + b_\beta(3) - v_\beta(1,3) = \frac{1}{2} + 0 - 0 = \frac{1}{2},
  \end{equation*}
  as expected.
  \hfill\eoex
\end{example}

\begin{example}
  \label{ex:7}
  We continue with Example~\ref{ex:5} and compute $d_\beta(1,j)$ at
  $\beta = (\ci, -\ci, \ci, -\ci)$ also by means of
  Lemma~\ref{lem:dbeta}. From Equation~\eqref{eq:15} we see that
  \begin{equation*}
    b_\beta(2) = b_\beta(3) = b_\beta(4) = \frac{1}{2}.
  \end{equation*}
  For computing the local distances, we also need the vanishing orders
  $v_\beta(1,j)$. Since $\beta$ belongs to the parameter value $t =
  \ci$ in the parametrization \eqref{eq:13} with positive sign of the
  square root, we have to compute the minimum vanishing order of the
  coordinates of $F_{1,j}(t)$ at $t = \ci$. We have
  \begin{equation*}
    F_{1,2}(\ci) \neq 0,\quad
    F_{1,3}(\ci) \neq 0,\quad
    F_{1,4}(\ci) = 0,\quad
    \frac{\mathrm{d}}{\mathrm{d}t}F_{1,4}(\ci) \neq 0.
  \end{equation*}
  Hence $v_\beta(1,2) = v_\beta(1,3) = 0$, $v_\beta(1,4) = 1$ and
  \begin{gather*}
    d_\beta(1,2) = \frac{1}{2} - 0 = \frac{1}{2},\quad
    d_\beta(1,3) = \frac{1}{2} + \frac{1}{2} - 0 = 1,\quad
    d_\beta(1,4) = \frac{1}{2} + \frac{1}{2} + \frac{1}{2} - 1 = \frac{1}{2},
  \end{gather*}
  as expected.
  \hfill\eoex
\end{example}

From these examples, some properties of local bonds are fairly
obvious. We state and prove them formally in

\begin{thm}
  \label{thm:local}
  For each bond $\beta$, the local distance $d_\beta$ has the
  following properties:
  \begin{enumerate}[label=\alph*)]
  \item The local distance is a pseudometric on $[n]$: For all $i \le
    j \le k \in [n]$ we have
      \begin{itemize}
      \item $d_\beta(i,i) = 0$,
      \item $d_\beta(i,j) = d_\beta(j,i)$,
      \item $d_\beta(i,k) \le d_\beta(i,j) + d_\beta(j,k)$ (triangle
        inequality).
      \end{itemize}
  \item $d_\beta(i,j)+d_\beta(j,k)+d_\beta(i,k) \in \N$
  \item $d_\beta(i-1,i+1)=b_\beta(i)+b_\beta(i+1)$
  \end{enumerate}
\end{thm}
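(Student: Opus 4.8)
The plan is to reduce all three parts to the order computation of Lemma~\ref{lem:dbeta}, $d_\beta(i,j)=\sum_{k=i+1}^j b_\beta(k)-v_\beta(i,j)$, supplemented by one structural estimate on the vanishing orders $v_\beta$. For the first two bullets of a), the equality $d_\beta(i,i)=0$ is immediate because the coupling map $f_{i,i}$ is the empty product $1$ and $Q(1)=1\neq0$ has vanishing order $0$. Symmetry $d_\beta(i,j)=d_\beta(j,i)$ follows because $f_{j,i}$ and $f_{i,j}$ are mutually inverse, inversion agrees with conjugation up to a nonzero scalar, and the primal part $Q$ of the norm satisfies $Q(\bar g)=Q(g)$ together with $Q(\lambda g)=\lambda^2 Q(g)$; hence rescaling by a unit leaves $\ord_\beta Q$ unchanged.

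For the triangle inequality I would first record the estimate $v_\beta(i,k)\ge v_\beta(i,j)+v_\beta(j,k)$, which comes from the factorization $F_{i,k}=F_{i,j}F_{j,k}$: writing $F_{i,j}=u^{v_\beta(i,j)}\tilde F_{i,j}$ and $F_{j,k}=u^{v_\beta(j,k)}\tilde F_{j,k}$ with a local parameter $u$ at $\beta$ and $\min\ord_\beta\tilde F=0$, the product equals $u^{v_\beta(i,j)+v_\beta(j,k)}\tilde F_{i,j}\tilde F_{j,k}$, whose coordinates still have nonnegative order. Substituting the Lemma~\ref{lem:dbeta} formula for $i\le j\le k$, the telescoping of the $b_\beta$-sums cancels and leaves $d_\beta(i,j)+d_\beta(j,k)-d_\beta(i,k)=v_\beta(i,k)-v_\beta(i,j)-v_\beta(j,k)\ge0$.

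For b), I note $2b_\beta(k)=\ord_\beta(t_k^2+1)\in\N$ and apply Lemma~\ref{lem:dbeta} again. For $i\le j\le k$ the three half-integer distances sum to $\sum_{m=i+1}^k 2b_\beta(m)-v_\beta(i,j)-v_\beta(j,k)-v_\beta(i,k)$, since the $b_\beta$-sums of the pairs $(i,j)$, $(j,k)$, $(i,k)$ combine into twice the sum from $i+1$ to $k$. Every summand on the right is an integer and the left-hand side is nonnegative by a), so the value lies in $\N$; the general unordered case reduces to this by symmetry.

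For c), Lemma~\ref{lem:dbeta} gives $d_\beta(i-1,i+1)=b_\beta(i)+b_\beta(i+1)-v_\beta(i-1,i+1)$, so the claim is equivalent to $v_\beta(i-1,i+1)=0$, that is, to $F_{i-1,i+1}=(t_i-h_i)(t_{i+1}-h_{i+1})\neq0$ at $\beta$. Suppose instead it vanished. Multiplying by $t_{i+1}+h_{i+1}$ on the right and $t_i+h_i$ on the left and using $\norm(t_k-h_k)=t_k^2+1$, as in the proof of Theorem~\ref{thm:i}, forces $t_i^2+1=t_{i+1}^2+1=0$. Applying the algebra homomorphism ``primal part'' $\D\H\to\H$ (legitimate because $\eps^2=0$) sends the relation to $(t_i-u_i)(t_{i+1}-u_{i+1})=0$ in $\H_\C\cong M_2(\C)$, where $u_i,u_{i+1}$ are the direction parts; both factors are then rank one with prescribed kernels and images, and their product vanishes only if a $\ci$-eigenline of one direction coincides with an eigenline of the other, which via the spinor correspondence means the axis directions are parallel. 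It then remains to examine the dual part in this parallel case: a short computation (in a representative sign case) reduces it to $(m_i-m_{i+1})(\ci+u)$, where $m_i,m_{i+1}\perp u$ are the moments, and since $m(\ci+u)=\ci m+m\times u$ is a sum of two orthogonal nonzero vectors whenever $m\neq0$, it cannot vanish when the two lines are distinct, i.e.\ incompatible. I expect this final step---excluding simultaneous vanishing of primal and dual parts for distinct parallel axes---to be the main obstacle, as it is the only place where incompatibility of consecutive axes is genuinely used and where the zero-divisor structure of $\D\H_\C$ must be controlled.
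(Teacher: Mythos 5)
Your parts a) and b) coincide with the paper's proof: the empty-product and conjugation arguments for the first two bullets, the superadditivity $v_\beta(i,k)\ge v_\beta(i,j)+v_\beta(j,k)$ from the factorization $F_{i,k}=F_{i,j}F_{j,k}$ followed by telescoping for the triangle inequality, and the identity $d_\beta(i,j)+d_\beta(j,k)+d_\beta(i,k)=2\sum_{l=i+1}^k b_\beta(l)-v_\beta(i,j)-v_\beta(j,k)-v_\beta(i,k)$ for integrality are exactly what the paper does.

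In part c) you make the correct reduction to $v_\beta(i-1,i+1)=0$, but the argument you then launch is both incomplete and unnecessarily heavy. You yourself flag the decisive step (excluding simultaneous vanishing of primal and dual parts for distinct parallel axes) as ``the main obstacle'' and only sketch it ``in a representative sign case''; as written, the sign cases, the claim that the primal product forces parallel directions, and the dual-part computation $(m_i-m_{i+1})(\ci+u)$ are all left unverified, so the proof of c) is not finished. The whole detour through $\H_\C\cong M_2(\C)$, eigenlines, and the primal/dual split can be avoided: if $(t_i-h_i)(t_{i+1}-h_{i+1})=0$ at $\beta$, expanding gives
\begin{equation*}
  t_it_{i+1}\cdot 1 - t_{i+1}h_i - t_i h_{i+1} + 1\cdot h_ih_{i+1} = 0,
\end{equation*}
a $\C$-linear relation among $1$, $h_i$, $h_{i+1}$, $h_ih_{i+1}$ whose coefficient on $h_ih_{i+1}$ equals $1$. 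Its real part is therefore a nontrivial $\R$-linear relation among these four vectors, contradicting $l_{i,i+1}=4$, which Theorem~\ref{thm:coup}.b) guarantees for any two incompatible consecutive axes. This is the paper's argument; it handles parallel and intersecting axes uniformly and uses incompatibility only through Theorem~\ref{thm:coup}.b). I recommend replacing your case analysis by this observation; everything else in your proposal stands.
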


\begin{proof}
  a) The first item is true because $d_\beta(i,i)$ is the vanishing
  order of the empty product whose value is defined to be $1$. The
  second item is true because $d_\beta(j,i)$ is the vanishing order of
  $Q(f_{j,i})$ at $\beta$. It equals $Q(f_{i,j})$ because $f_{j,i}$ is
  the conjugate of $f_{i,j}$. In order to prove the triangle
  inequality, we observe that $v_\beta(i,j) + v_\beta(j,k) \le
  v_\beta(i,k)$ because the formal product for computing the
  right-hand side can be factored into the formal products for
  computing the left-hand side. Thus, by Equation~\eqref{eq:17}, we
  have
  \begin{multline*}
    d_\beta(i,j) + d_\beta(j,k)
    = \sum_{l=i+1}^j b_\beta(l) - v_\beta(i,j) + \sum_{l=j+1}^k b_\beta(l) - v_\beta(j,k) \\
    = \sum_{l=i+1}^k b_\beta(l) - v_\beta(i,j) - v_\beta(j,k)
    \ge \sum_{l=i+1}^k b_\beta(l) - v_\beta(i,k) = d_\beta(i,k).
  \end{multline*}
  \par
  b) By Equation \eqref{eq:17} we have
  \begin{multline*}
    d_\beta(i,j) + d_\beta(j,k) + d_\beta(i,k) =
    2\sum_{l=i+1}^k b_\beta(l) - v_\beta(i,j) - v_\beta(j,k) - v_\beta(i,k).
  \end{multline*}
  The right-hand side is a sum of integers and the left-hand side is
  non-negative.\par
  c) is equivalent to $v_\beta(i-1,i+1) = 0$, that is, the product
  $(t_i-h_i)(t_{i+1}-h_{i+1})$ does not vanish at $\beta$. Expanding
  this product and assuming, to the contrary, that it does vanish, 
  we get a nontrivial relation with complex coefficients
  between the vectors $1,h_i,h_{i+1},h_ih_{i+1}$. Its real or complex
  part is a nontrivial relation with real coefficients. Under our
  general assumption that two consecutive revolute axes are never
  identical, this contradicts Theorem~\ref{thm:coup}.b).
\end{proof}

\subsection{Distances and joint lengths}
\label{sec:distances}

Now we introduce (non-local) distances and joint lengths and relate
them to local distances and joint lengths.

\begin{defn}
  \label{def:distance}
  The \emph{distance $d(i,j)$} is defined as
  $d(i,j):=\deg(C_{i,j})\deg(f_{i,j})$, where $\deg(C_{i,j})$ is the
  degree of the coupling curve as a projective curve in $\P^7$ and
  $\deg(f_{i,j})$ is the algebraic degree of the coupling map
  $f_{i,j}\colon K\to C_{i,j}$, that is, the cardinality of a generic
  pre-image when we consider also complex points of $K$. Moreover, we
  write $b(i):=d(i-1,i)$ for $i=1,\dots,n$, $d(0,1)=d(n,1)$ and call
  the numbers $b(1),\dots,b(n)$ the \emph{joint lengths.}
\end{defn}

The definition of $d(i,j)$ as geometric degree times multiplicity
suggests to refer to it also as \emph{algebraic degree} of the
coupling curve $C_{i,j}$.

It is a good point to clarify some of our terminology. When we speak
of a coupling curve, we mean the relative motion between two links. In
the Study quadric model of Euclidean displacements this is, indeed, a
curve. To us, the degree of a motion is the degree of the
corresponding curve on the Study quadric. This differs from the notion
of a motion's degree as the degree of a generic trajectory. Twice the
degree of the curve on the Study quadric is an upper bound for the
trajectory degree.

Since the coupling curve $C_{i-1,i}$ is a straight line (corresponding
to the rotation around the axis $h_i$), $\deg(C_{i-1,i}) = 1$ and
$b(i)$ just equals the degree of the map $f_{i-1,i}$. In particular,
if $b(i)=1$, all coupling curves can be parametrized by the revolute
angle at $h_i$ (this angle unambiguously determines the linkage
configuration).

\begin{thm}
  \label{thm:bond}
  \begin{enumerate}[label=\alph*)]
  \item The distance $d$ is the sum of the local distances:
    $d(i,j)=\sum_\beta d_\beta(i,j)$ for all $i,j\in [n]$.
  \item The distance $d$ is a pseudometric on $[n]$.
  \item For $i \le j \le k\in [n]$, $d(i,j) + d(j,k) + d(i,k)$ is a
    positive even integer.
  \item For $i\in [n]$, we have $d(i-1,i+1) = b(i) + b(i+1)$.
  \end{enumerate}
\end{thm}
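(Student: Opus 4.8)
The plan is to reduce each part of Theorem~\ref{thm:bond} to the corresponding local statement already established in Theorem~\ref{thm:local}, using part~a) as the bridge. The crucial observation is that once we know $d(i,j) = \sum_\beta d_\beta(i,j)$, properties b), c), and d) follow by summing the local versions over all bonds. So the real work lies in proving part~a); the remaining parts are essentially formal consequences.

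For part~a), the approach is to interpret $d(i,j) = \deg(C_{i,j})\deg(f_{i,j})$ as an intersection-theoretic quantity. The local distance $d_\beta(i,j) = \tfrac{1}{2}\ord_\beta Q(f_{i,j})$ measures the vanishing order of the quadratic form $Q$ (the primal part of the norm) along the coupling map at the bond $\beta$. The key identity to exploit is that $Q(f_{i,j})$, viewed as a rational function on $\normalization(K)$, has its total degree of zeros equal to the intersection number of the coupling curve $C_{i,j}$ with the cone $G = \{Q = 0\}$. On one hand, this intersection number equals $\deg(C_{i,j})$ times the degree of $G$ as a hypersurface (which is $2$, since $Q$ is quadratic), counted with the covering multiplicity $\deg(f_{i,j})$; after the factor $\tfrac{1}{2}$ in the definition of $d_\beta$, this produces exactly $\deg(C_{i,j})\deg(f_{i,j}) = d(i,j)$. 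On the other hand, since the only points where $Q(f_{i,j})$ can vanish are the bonds (at a regular linkage configuration the image lies in $S - E \subset \SE$, where the primal norm is a nonzero real, so $Q \neq 0$), the total vanishing order is $\sum_\beta \ord_\beta Q(f_{i,j}) = 2\sum_\beta d_\beta(i,j)$. Equating the two expressions yields part~a).

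For the remaining parts I would argue as follows. Part~b): summing the pseudometric axioms of Theorem~\ref{thm:local}.a) over all $\beta$ preserves symmetry, vanishing on the diagonal, and the triangle inequality, so $d$ is again a pseudometric. Part~c): by Theorem~\ref{thm:local}.b), each summand $d_\beta(i,j)+d_\beta(j,k)+d_\beta(i,k)$ is a nonnegative integer, hence the total sum is a nonnegative integer; evenness follows from Theorem~\ref{thm:coup}.a), which forces all coupling dimensions to be even and, via the degree interpretation, makes each $d(i,j)$ a half-integer whose relevant combination is even, while positivity follows because the configuration curve is genuinely one-dimensional so at least one local contribution is strictly positive. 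Part~d): sum Theorem~\ref{thm:local}.c), giving $d(i-1,i+1) = \sum_\beta\bigl(b_\beta(i)+b_\beta(i+1)\bigr) = b(i) + b(i+1)$ directly from the definition of joint lengths as sums of local joint lengths.

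The main obstacle I anticipate is making the intersection-theoretic count in part~a) fully rigorous, in particular justifying that $Q(f_{i,j})$ has no zeros other than at bonds and that the covering degree $\deg(f_{i,j})$ enters the count with the correct multiplicity. This requires working carefully on the normalization $\normalization(K)$, where $f_{i,j}$ is everywhere defined, and invoking the fact that for a morphism from a smooth complete curve the pullback of a global degree-$d$ section has total vanishing order $d\cdot\deg(f_{i,j})\cdot\deg(C_{i,j})$, suitably normalized. The subtlety that $F_{i,j}$ itself may vanish at bonds (handled by the correction term $v_\beta$ in Lemma~\ref{lem:dbeta}) must be reconciled with the clean behavior of the map $f_{i,j}$; Lemma~\ref{lem:dbeta} is precisely the tool that lets us pass between the naive polynomial $F_{i,j}$ and the genuine map $f_{i,j}$, so I expect it to play a central role in controlling these base-point contributions.
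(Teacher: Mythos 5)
Your treatment of parts a), b), and d) is essentially the paper's own argument: compute $d(i,j)$ by intersecting $C_{i,j}$ with the quadric cone $\{Q=0\}$, pull this back to $\normalization(K)$ with the covering multiplicity $\deg(f_{i,j})$ (total vanishing order $2\deg(C_{i,j})\deg(f_{i,j})$), observe that the only zeros of $Q(f_{i,j})$ are the bonds, each contributing $\ord_\beta Q(f_{i,j})=2d_\beta(i,j)$, and then obtain b)--d) by summing the corresponding items of Theorem~\ref{thm:local} over all bonds. One small imprecision: to see that $Q(f_{i,j})$ vanishes only at bonds you argue via real configurations lying in $S-E$, but the relevant points are the complex non-bond points of $K_\C$; there one uses that the full product $(t_1-h_1)\cdots(t_n-h_n)$ is a nonzero scalar, so its norm $\prod_k(t_k^2+1)$ is nonzero and hence so is $Q(F_{i,j})=\prod_{k=i+1}^j(t_k^2+1)$.

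The genuine gap is the evenness claim in part c). You attribute it to Theorem~\ref{thm:coup}.a), i.e.\ to the parity of the coupling dimensions $l_{1,\ldots,i}$. That is a statement about the real dimension of the linear span $L_{1,\ldots,i}\subset\R^8$ and has no bearing on the parity of the degrees $d(i,j)$; the local sums $d_\beta(i,j)+d_\beta(j,k)+d_\beta(i,k)$ are merely natural numbers by Theorem~\ref{thm:local}.b), so summing them over all bonds only yields an integer, not an even one, and no ``half-integer'' bookkeeping rescues this. The missing idea --- which the paper supplies explicitly at exactly this point --- is that bonds come in conjugate-complex pairs $\{\beta,\bar\beta\}$ and that $d_{\bar\beta}(i,j)=d_\beta(i,j)$ because all defining data are real; hence the total is twice a sum of integers. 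Positivity is then immediate since for distinct indices $d(i,j)=\deg(C_{i,j})\deg(f_{i,j})\ge 1$, no coupling map being constant.
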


\begin{proof}
  a) For computing $d(i,j)$, we can take any quadratic form that does
  not vanish on $C_{i,j}$, count the points in $\normalization(K)$
  where this form vanishes (counting means with multiplicities), and
  divide by two. We take $Q$, the primal part of the norm, as
  quadratic form. The points where $Q$ vanishes are bonds, and the
  multiplicity of $\beta$ is $2d_\beta(i,j)$.\par
  b), c) and d) are easy consequences of a) and the corresponding
  statements in Theorem~\ref{thm:local}. For statement c), we also
  need to observe that bonds always come as conjugate pairs, and the
  local distances for conjugate bonds are equal.
\end{proof}

The importance of Theorem~\ref{thm:bond} lies in the fact that it
connects local distances, which are part of the bond structure and
have an algebraic meaning, with distances (or algebraic degrees),
which have a geometric meaning. We collect the distances in the
\emph{distance matrix $D = \sum_\beta D_\beta$.}

\begin{example}
  \label{ex:8}
  The distance matrices for the Bennett linkage example, the planar
  four-bar example, and the Goldberg linkage are
  \begin{equation}
    \label{eq:18}
    \begin{pmatrix}
      0 & 1 & 2 & 1 \\
      1 & 0 & 1 & 2 \\
      2 & 1 & 0 & 1 \\
      1 & 2 & 1 & 0
    \end{pmatrix},
    \quad
    \begin{pmatrix}
      0 & 2 & 4 & 2 \\
      2 & 0 & 2 & 4 \\
      4 & 2 & 0 & 2 \\
      2 & 4 & 2 & 0
    \end{pmatrix},
    \quad\text{and}\quad
    \begin{pmatrix}
      0 & 1 & 2 & 3 & 2 \\
      1 & 0 & 1 & 2 & 3 \\
      2 & 1 & 0 & 1 & 2 \\
      3 & 2 & 1 & 0 & 1 \\
      2 & 3 & 2 & 1 & 0
    \end{pmatrix},
  \end{equation}
  respectively. The first and second matrix are obtained by adding the
  matrices given in Equation~\eqref{eq:14} and Equation~\eqref{eq:15},
  respectively, and multiplying them by two (because the bonds come in
  conjugate complex pairs with identical local joint distances). The
  matrix for the Goldberg linkage was obtained by means of
  Theorem~\ref{thm:degree} from the bond diagram in
  Figure~\ref{fig:bond-diagrams}.c).

  In the Bennett case, neighboring links have a relative motion of
  degree one (a rotation about their common axes) and opposite links
  have a relative motion of degree two. In the planar four-bar case,
  the relative motion of neighboring links is still a rotation but a
  generic rotation angle occurs twice. Hence, this motion is of degree
  two. The relative motion of opposite links is of degree four. These
  well-known facts are confirmed by Equations~\eqref{eq:14}
  and~\eqref{eq:15} in conjunction with Definition~\ref{def:distance}.
  \hfill\eoex
\end{example}

\subsection{Connection numbers and bond diagrams}
\label{sec:bond-diagrams}

In this section,
we define the \emph{connection number} for two joints and visualize it
in \emph{bond diagrams.} These are linkage graphs (with vertices
denoting links and edges denoting joints) augmented with additional
connections between certain edges. They serve as a pictorial
representation for part of the information encoded in the linkage's
bond structure. It is possible to directly ``read off'' certain
linkage properties from its bond diagram.

Consider a typical bond $\beta$ with $t_i^2+1=t_j^2+1=0$ for $i < j
\in [n]$. From the linkage graph we remove the edges labeled $h_i$
and $h_j$, thus producing two unconnected chain graphs. Then
$d_\beta(k,l) = 0$ if the vertices labeled $o_k$ and $o_l$ are in the
same component and $d_\beta(k,l) = d_\beta(i-1,i)$ if they are in
different components. We say that the \emph{connection number
  $k_\beta(i,j)$} for this typical bond is equal to $2d_\beta(i-1,i)$
or that \emph{the bond $\beta$ connects $h_i$ and $h_j$ with
  multiplicity $2d_\beta(i-1,i)$.} For the typical bonds in our
examples, we always have $k_\beta(i,j) = 1$. Pictorially, a typical
bond $\beta$ cuts the link diagram into two parts, which are separated
by a fixed distance, generically~$\frac{1}{2}$. The same holds true
for the conjugate bond $\bar{\beta}$. 
Both bonds together
account for the total connection number of $2d_\beta(i-1,i)$.

\begin{defn}
  \label{def:elementary-bond}
  A bond $\beta$ is called \emph{elementary,} if $\sum_{i=1}^n
  b_\beta/2 = 1$.
\end{defn}

Every elementary bond is typical but a typical bond need not be
elementary. The typical bonds in our examples are all elementary. For
a linkage with only elementary bonds, the number $k(i,j)$ of bonds
connecting $h_i$ and $h_j$ equals
\begin{equation}
  \label{eq:19}
  k(i,j) = d(i,j) + d(i-1,j-1) - d(i,j-1) - d(i-1,j).
\end{equation}
Indeed, for an elementary bond $\beta$ we have
\begin{multline*}
  d_\beta(i,j) + d_\beta(i-1,j-1) - d_\beta(i,j-1) - d_\beta(i-1,j) =
  \begin{cases}
    1 & \text{if $\beta$ connects $h_i$ and $h_j$,}\\
    0 & \text{else.}
  \end{cases}
\end{multline*}
By Theorem~\ref{thm:bond}.a), the right-hand side of \eqref{eq:19}
really counts the bonds connecting $h_i$ and $h_j$. These observations
for elementary bonds motivate the following definition for the general
setting.

\begin{defn}
  \label{def:connection-number}
  For a closed linkage $L = (h_1,\ldots,h_n)$ with bond $\beta$ and $i
  < j \in [n]$, the \emph{connection number $k_\beta(i,j)$ at $\beta$}
  is defined as
  \begin{equation}
    \label{eq:20}
    k_\beta(i,j) =
    d_\beta(i,j) + d_\beta(i-1,j-1) - d_\beta(i,j-1) - d_\beta(i-1,j).
  \end{equation}
  We also say that \emph{the bond $\beta$ connects the joints $h_i$
    and $h_j$ with multiplicity~$k_\beta(i,j)$.}
\end{defn}

\begin{lem}
  \label{lem:integer}
  The connection number $k_\beta(i,j)$ is an integer.
\end{lem}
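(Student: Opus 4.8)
The plan is to reduce everything to Lemma~\ref{lem:dbeta}, which expresses each local distance as a sum of local joint lengths minus an integer vanishing order. First I would write out each of the four local distances occurring in \eqref{eq:20} by means of \eqref{eq:17}:
\[
d_\beta(i,j)=\sum_{k=i+1}^{j} b_\beta(k) - v_\beta(i,j),
\]
and analogously for $d_\beta(i-1,j-1)$, $d_\beta(i,j-1)$, and $d_\beta(i-1,j)$, where each $v_\beta$ is the minimal vanishing order of the coordinates of the corresponding product $F$, hence a non-negative integer.

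The key observation is that, upon substituting these four expressions into \eqref{eq:20}, the local joint lengths cancel \emph{completely}. The positive contributions carry index ranges $[i+1,j]$ and $[i,j-1]$, while the negative contributions carry ranges $[i+1,j-1]$ and $[i,j]$. Tallying the coefficient of a fixed $b_\beta(k)$: the boundary value $b_\beta(i)$ appears once positively (from $[i,j-1]$) and once negatively (from $[i,j]$); the boundary value $b_\beta(j)$ appears once positively (from $[i+1,j]$) and once negatively (from $[i,j]$); and every interior $b_\beta(k)$ with $i+1\le k\le j-1$ appears twice positively and twice negatively. In every case the net coefficient is zero.

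What remains is therefore a pure combination of vanishing orders,
\[
k_\beta(i,j)=v_\beta(i,j-1)+v_\beta(i-1,j)-v_\beta(i,j)-v_\beta(i-1,j-1),
\]
which is an integer precisely because each $v_\beta$ is. This establishes the claim.

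The only points requiring care --- and I expect this to be the main, though minor, obstacle --- are the degenerate index configurations. When $j=i+1$, the term $d_\beta(i,j-1)=d_\beta(i,i)$ is governed by the empty-product convention, so its summation range $[i+1,j-1]$ is empty and $v_\beta(i,i)=0$; the cancellation argument is unaffected. Similarly, when $i=1$ the index $i-1$ is to be read via the cyclic convention $h_{kn+i}:=h_i$, and Lemma~\ref{lem:dbeta} continues to apply to each of the four terms. With these conventions in place, the cancellation of the $b_\beta$ terms together with the integrality of the $v_\beta$ yields the result.
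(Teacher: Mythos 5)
Your proposal is correct and follows exactly the paper's own argument: substituting \eqref{eq:17} into \eqref{eq:20}, observing that the local joint lengths cancel, and concluding that $k_\beta(i,j) = v_\beta(i,j-1)+v_\beta(i-1,j)-v_\beta(i,j)-v_\beta(i-1,j-1)$ is a sum of integers. The paper states the cancellation without the explicit bookkeeping you supply, but the content is the same.
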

\begin{proof}
  By \eqref{eq:17} and \eqref{eq:20}, we have
  \begin{equation*}
    k_\beta(i,j) =  v_\beta(i,j-1) + v_\beta(i-1,j) - v_\beta(i,j) - v_\beta(i-1,j-1).
  \end{equation*}
  This is a sum of integers.
\end{proof}

We visualize a bond and its connection number by \emph{bond diagrams.}
These are obtained by drawing $k_\beta(i,j)$ connecting lines between
the edges $h_i$ and $h_j$ for each set $\{\beta, \bar{\beta}\}$ of
conjugate complex bonds. 
Since we cannot exclude that $k_\beta(i,j) < 0$, we visualize
negative connection numbers by drawing
the appropriate number of dashed connecting lines (because the
dash resembles a ``minus'' sign). No example in this article
has negative connection number. Actually, the authors do not know
if closed 6R linkages may or may not have bonds with negative
connection numbers. 

\begin{figure*}
  \centering
  \includegraphics{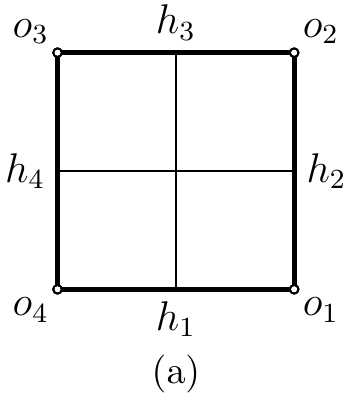}
  \quad
  \includegraphics{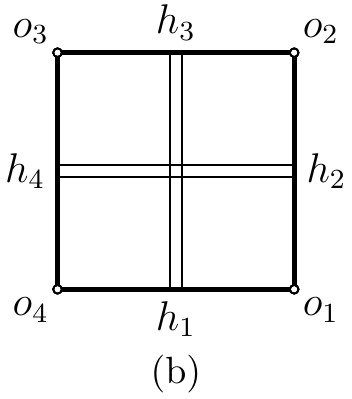}
  \quad
  \includegraphics{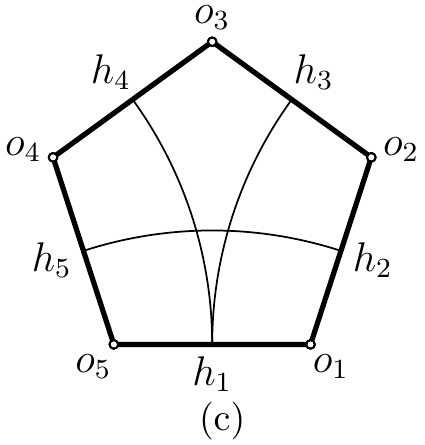}
  \caption{Bond diagrams for the Bennett linkage (a), spherical and planar
    four-bar (b), and Goldberg linkage~(c)}
  \label{fig:bond-diagrams}
\end{figure*}

\begin{example}
  \label{ex:9}
  The bond diagrams for our prototype examples, the Bennett linkage
  and the planar four-bar linkage, are depicted in
  Figure~\ref{fig:bond-diagrams}.a) and b). The elementary bonds with
  $t_i^2+1=t_j^2+1=0$ connect only $h_i$ and $h_j$ with connection
  multiplicity one. The non-typical bond of the planar four-bar
  example connects $h_1$ with $h_3$ and $h_2$ with $h_4$, both with
  connection multiplicity one. Its local distance matrix is sum of the
  elementary bonds' distance matrices. We remark that
  Figure~\ref{fig:bond-diagrams}.b) also gives the bond diagram for
  the spherical four-bar linkage of Example~\ref{ex:2}. Intuitively,
  two elementary bonds of the spherical four-bar coincide in the
  planar four-bar.
  \hfill\eoex
\end{example}

The algebraic degrees of relative coupling motions $C_{i,j}$ are the
entries $d(i,j)$ of the linkage's distance matrix $D = \sum_{\beta}
D_\beta$. These entries can also be read off directly from the bond
diagram which gives us the connection numbers. The following theorem
describes how to do this. It is, essentially, a graphical method to
invert the linear map $\delta$ defined by Equation~\eqref{eq:20}.

\begin{thm}
  \label{thm:degree}
  The algebraic degree of the coupling curve $C_{i,j}$ can be read off
  from the bond diagram as follows: Cut the bond diagram at the
  vertices $o_i$ and $o_j$ to obtain two chain graphs with endpoints
  $o_i$ and $o_j$; the degree of $C_{i,j}$ is the sum of all
  connections that are drawn between these two components (dashed
  connections counted negatively).
\end{thm}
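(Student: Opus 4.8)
The plan is to reduce the statement to a purely combinatorial identity about a single bond and then prove that identity by telescoping. By Theorem~\ref{thm:bond}.a) the algebraic degree satisfies $d(i,j)=\sum_\beta d_\beta(i,j)$, so it suffices to control the local contributions $d_\beta(i,j)$. Cutting the bond diagram at the vertices $o_i$ and $o_j$ partitions the $n$ joints into the two arcs $A=\{h_{i+1},\ldots,h_j\}$ and $B=\{h_{j+1},\ldots,h_n,h_1,\ldots,h_i\}$; a connection $k_\beta(k,l)$ is ``drawn between the two components'' precisely when one of its endpoints lies in $A$ and the other in $B$. Thus the quantity the theorem computes is $\sum_{a\in A,\,b\in B}k_\beta(a,b)$, summed over conjugate pairs, and the whole theorem will follow once I establish, for each individual bond $\beta$, the local identity
\begin{equation*}
  \sum_{a\in A,\,b\in B} k_\beta(a,b) = 2\,d_\beta(i,j).
\end{equation*}

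First I would recall that the connection number is a discrete mixed second difference of the local distance, $k_\beta(p,q)=d_\beta(p,q)+d_\beta(p-1,q-1)-d_\beta(p,q-1)-d_\beta(p-1,q)$, which is just Definition~\ref{def:connection-number}. I would split $B$ into the two genuine index intervals $\{j+1,\ldots,n\}$ and $\{1,\ldots,i\}$ so that in each of the resulting double sums the two arguments appear in increasing order, and then evaluate each double sum by telescoping the mixed difference over a rectangle. The two rectangles collapse to their corner values of $d_\beta$; using $d_\beta(k,k)=0$ and the symmetry $d_\beta(p,q)=d_\beta(q,p)$ from Theorem~\ref{thm:local}.a), the boundary contributions $d_\beta(n,i)$ and $d_\beta(n,j)$ cancel in conjugate pairs, leaving exactly $2\,d_\beta(i,j)$. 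The only place where the cyclic nature of the chain enters is the identification $d_\beta(0,\cdot)=d_\beta(n,\cdot)$ coming from $o_0=o_n$, which is what makes the two rectangles close up into a single cut of the cycle.

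Finally I would assemble the global statement. Summing the local identity over all bonds gives $\sum_\beta\sum_{a\in A,\,b\in B}k_\beta(a,b)=2\,d(i,j)$ by Theorem~\ref{thm:bond}.a). Since the bond diagram draws the connecting lines once per conjugate pair $\{\beta,\bar{\beta}\}$, and conjugate bonds have equal local distances and hence equal connection numbers, the number of drawn lines crossing the cut is exactly half of the left-hand side, namely $d(i,j)$; this is the asserted reading rule. Dashed (negative) connections contribute with their sign automatically, because the identity is algebraic in the integers $k_\beta$.

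The main obstacle I anticipate is bookkeeping rather than conceptual: getting the telescoping right across the cyclic wrap-around of $B$, and verifying that the factor of two produced by the local identity is exactly absorbed by the ``one line per conjugate pair'' convention of the diagram. I would cross-check the sign conventions and the boundary cases (for instance adjacent cuts $j=i+1$, where $A$ is a single joint and $d(i,i+1)=b(i+1)$) against the worked Bennett and planar four-bar examples to be confident that the rectangle corners cancel as claimed.
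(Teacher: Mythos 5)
Your proof is correct and follows the same overall strategy as the paper: both reduce the theorem to the single-bond identity $\sum_{r\in A,\,s\in B}k_\beta(r,s)=2\,d_\beta(i,j)$ (the paper's $a_\beta(i,j)=2d_\beta(i,j)$ from Equation~\eqref{eq:21}), then sum over all bonds via Theorem~\ref{thm:bond}.a) and halve because the diagram draws one set of connecting lines per conjugate pair. The difference lies in how that identity is established. The paper views \eqref{eq:20} and \eqref{eq:21} as linear maps $f,g$ on the space of symmetric matrices with zero diagonal, verifies by an occurrence-and-sign count that $f\circ g=2\,\mathrm{id}$, and then uses finite-dimensionality to conclude that $g\circ f=2\,\mathrm{id}$, which is the composition actually needed. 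You instead compute $g\circ f$ directly: since $k_\beta$ is a mixed second difference of $d_\beta$, the sum over each of the two index rectangles telescopes to corner values, which collapse to $2d_\beta(i,j)$ using $d_\beta(k,k)=0$, the symmetry of $d_\beta$, and the cyclic identification $d_\beta(0,\cdot)=d_\beta(n,\cdot)$. (One quibble: your phrase that the boundary terms ``cancel in conjugate pairs'' is a misleading choice of words for what is really cancellation of $d_\beta(j,n)-d_\beta(n,j)$ and $d_\beta(n,i)-d_\beta(i,n)$ by symmetry of the pseudometric, not anything to do with complex conjugation; the mathematics is nevertheless right.) Your route is more direct in that it proves exactly the composition required and avoids the inversion step; the paper's route trades the telescoping bookkeeping for a cleaner combinatorial count of how often each $k_\beta(r,s)$ appears. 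Both are valid, and your explicit treatment of the conjugate-pair halving at the end makes a step the paper leaves implicit.
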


\begin{proof}
  Let $\beta$ be a bond. For any two different links $o_i$ and $o_j$, the number of connections
  between the two subchains obtained by cutting the bond diagram at $o_i$
  and $o_j$ belonging to $\beta$ is equal to 
  \begin{equation}
    \label{eq:21} 
    a_\beta(i,j):=\sum_{r=i+1}^j\sum_{s=j+1}^i k_\beta(r,s).
  \end{equation}
  We identify the space of symmetric $n \times n$ matrices with zero
  diagonal with $\R^N$, $N = {n \choose 2}$ and denote by $K_\beta$
  and $A_\beta$ the matrices with respective entries $k_\beta(i,j)$
  and $a_\beta(i,j)$. Equations~\eqref{eq:20} and \eqref{eq:21} define
  two linear maps $f,g\colon\R^N\to\R^N$, $f(D_\beta) = K_\beta$ and
  $g(K_\beta) = A_\beta$.
  We claim that $f \circ g$ is twice the identity. Indeed, a summand
  $k_\beta(r,s)$ in
  \begin{equation*}
   \sum_{r=i+1}^{j}\sum_{s=j+1}^{i} k_\beta(r,s) +
   \sum_{r=i}^{j-1}\sum_{s=j}^{i-1} k_\beta(r,s) -
   \sum_{r=i+1}^{j-1}\sum_{s=j}^{i} k_\beta(r,s) - 
   \sum_{r=i}^{j}\sum_{s=j+1}^{i-1} k_\beta(r,s) 
  \end{equation*}
  occurs four times with signs $+,+,-,-$ if
  $\{r,s\}\cap\{i,j\}=\varnothing$, twice with signs $+,-$ if
  $\{r,s\}$ and $\{i,j\}$ have one element in common, and twice with
  signs $+,+$ if $\{r,s\}=\{i,j\}$.

  Since $f,g$ are linear maps between finite-dimensional vectorspaces,
  it follows that $g\circ f$ is also twice the identity. Therefore $d_\beta(i,j)=\frac{a_\beta(i,j)}{2}$
  for all pairs $i,j$ such that $i\ne j$. By summing over all bonds, we get the theorem. 
\end{proof}

\begin{example}
  \label{ex:10}
  We illustrate the procedure for computing the distances (or coupling
  curve degrees) in Figure~\ref{fig:degree}. In order to determine the
  degree of the coupling curve $C_{3,5}$, we cut the bond diagram
  along the line through $o_3$ and $o_5$ and count the connections
  between the two chain graphs. There are precisely two of them, one
  connecting $h_1$ with $h_4$ and one connecting $h_2$ with $h_5$.
  Thus, the algebraic degree $d(3,5)$ of $C_{3,5}$ is two. The reader
  is invited to compute the complete data of Equation~\eqref{eq:18} by
  means of the bond-diagrams in Figure~\ref{fig:bond-diagrams}.
  \hfill\eoex
\end{example}

\begin{figure}
  \centering
  \includegraphics{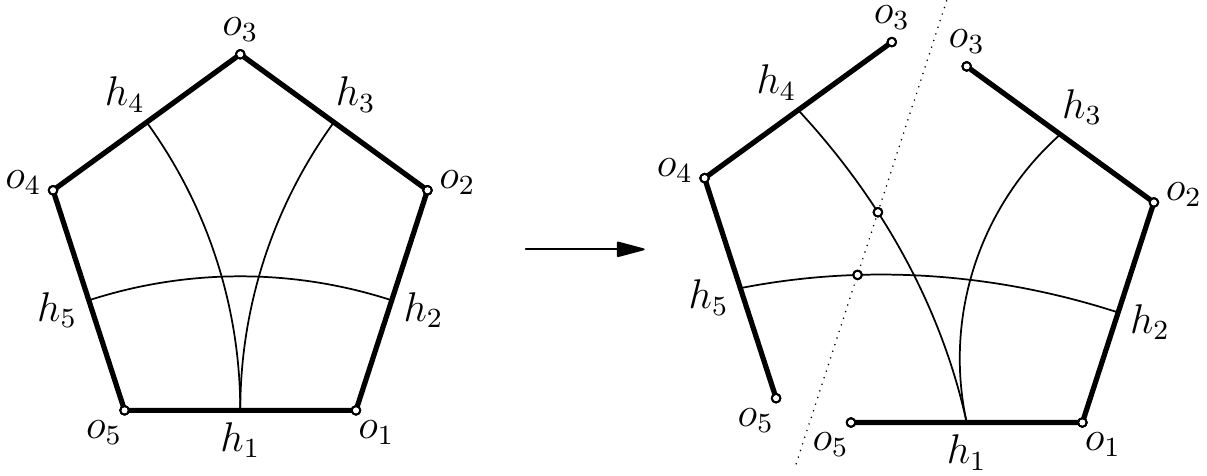}
  \caption{Computing the degree of coupling curves by counting
    connections in the bond-diagram}
  \label{fig:degree}
\end{figure}

In the beginning, when we learned the properties of bonds mostly from
observation, the majority of linkages we studied had only simple
bonds. It occurred to us that these special points on the
configuration curve somehow mysteriously connects two of the $n$
joints, which are not joined by a link. This is the reason for the
name ``bond''. We emphasize that it should not be confused with the
already established concept of a ``kinematic bond''
\cite[Chapter~5]{angeles89}.

\subsection{More properties of bonds}

We briefly state a few additional properties of bonds that follow
immediately from our considerations so far or can easily be shown. We
talk about the bonds of the linkage $L=(h_1,\ldots,h_n)$. Recall also
the introduction of the coupling space dimension $l_{i,i+1,\ldots,j} =
\dim L_{i,i+1,\ldots,j}$ in Definition~\ref{def:4}.

\begin{cor}
  \label{cor:impl}
  If $d(1,4)<d(1,2)+d(2,3)+d(3,4)$, then $l_{2,3,4}\le 6$.
\end{cor}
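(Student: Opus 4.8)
The plan is to prove the contrapositive: assuming $l_{2,3,4}=8$, show that the triangle inequality of Theorem~\ref{thm:bond}.b) is in fact an equality, $d(1,4)=d(1,2)+d(2,3)+d(3,4)$. Since the coupling dimension of three consecutive axes is always even (the evenness argument of Theorem~\ref{thm:coup}.a) is index independent: $L_{2,3,4}$ is a left vector space over the field $L_2\cong\C$) and never exceeds $8$, excluding the value $8$ under the strict inequality forces $l_{2,3,4}\le 6$.

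First I would assemble a master identity for the defect of the triangle inequality. The products $F_{1,2}$, $F_{2,3}$, $F_{3,4}$ are single factors $(t_k-h_k)$, which never vanish, so their minimal orders are zero and $d_\beta(1,2)=b_\beta(2)$, $d_\beta(2,3)=b_\beta(3)$, $d_\beta(3,4)=b_\beta(4)$. Lemma~\ref{lem:dbeta} gives $d_\beta(1,4)=b_\beta(2)+b_\beta(3)+b_\beta(4)-v_\beta(1,4)$, whence
\begin{equation*}
  d_\beta(1,2)+d_\beta(2,3)+d_\beta(3,4)-d_\beta(1,4)=v_\beta(1,4).
\end{equation*}
Summing over all bonds and invoking Theorem~\ref{thm:bond}.a) turns this into
\begin{equation*}
  d(1,2)+d(2,3)+d(3,4)-d(1,4)=\sum_\beta v_\beta(1,4).
\end{equation*}
The hypothesis says precisely that the left-hand side is positive, so some bond $\beta$ satisfies $v_\beta(1,4)>0$, i.e.\ the coupling product $F_{1,4}=(t_2-h_2)(t_3-h_3)(t_4-h_4)$ vanishes at $\beta$.

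Next I would convert this vanishing into a linear relation inside $L_{2,3,4}$. Writing each factor in homogeneous coordinates as $u_k-w_kh_k$ with $[u_k:w_k]\in\P^1_\C$ (this is what covers a possible bond coordinate at infinity), the expansion of the product is a $\C$-linear combination of the eight generators $1,h_2,h_3,h_4,h_2h_3,h_2h_4,h_3h_4,h_2h_3h_4$ of $L_{2,3,4}$, the coefficient of the ordered product over $S\subseteq\{2,3,4\}$ being $\pm\prod_{k\notin S}u_k\prod_{k\in S}w_k$. The vanishing of the product means this combination is zero, and the relation is nontrivial: choosing for each $k$ a nonzero representative among $u_k,w_k$ (possible because $[u_k:w_k]$ is a genuine projective point) produces one coefficient that is a product of nonzero numbers, hence nonzero. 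Thus the eight generators are $\C$-linearly dependent; being real vectors, they are then $\R$-linearly dependent, so $l_{2,3,4}<8$ and therefore $l_{2,3,4}\le 6$.

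The step needing the most care is the passage from $v_\beta(1,4)>0$ to an honest vanishing of the homogenized product at $\beta$ when some $t_k$ is infinite. Clearing the denominator via $t_k-h_k=(u_k-w_kh_k)/w_k$ gives $\prod_k(u_k-w_kh_k)=(w_2w_3w_4)\,F_{1,4}$, so $\min\ord_\beta\bigl(\prod_k(u_k-w_kh_k)\bigr)=v_\beta(1,4)+\ord_\beta(w_2w_3w_4)>0$, which yields the vanishing of $\prod_k(u_k-w_kh_k)$ at $\beta$; the remaining nontriviality of the resulting relation is then the projective-point bookkeeping described above.
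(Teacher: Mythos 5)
Your proposal is correct and follows essentially the same route as the paper: Lemma~\ref{lem:dbeta} forces $v_\beta(1,4)>0$ at some bond, the vanishing of $(t_2-h_2)(t_3-h_3)(t_4-h_4)$ yields a nontrivial linear relation among the eight generators of $L_{2,3,4}$, hence $l_{2,3,4}<8$, and evenness (Theorem~\ref{thm:coup}.a) rules out $7$. Your extra care with the homogenization at infinity and the explicit nontriviality check are refinements the paper leaves implicit, but they do not change the argument.
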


\begin{proof}
  There must exist at least one bond such that
  $d_\beta(1,4)<d_\beta(1,2)+d_\beta(2,3)+d_\beta(3,4)$. For this
  bond, call it $\beta$, we have $v_\beta(1,4)>0$ by
  Lemma~\ref{lem:dbeta}. Let $t_2$, $t_3$, $t_4$ be the second, third,
  and fourth coordinate of $\beta$, respectively. Since
  $v_\beta(1,4)>0$, the formal product of the corresponding rotations
  vanishes at $\beta$, i.e.\ $(t_2-h_2)(t_3-h_3)(t_4-h_4)=0$.
  Expanding this product, we get a nontrivial relation with complex
  coefficients between the vectors
  $1,h_2,h_3,h_4,h_2h_3,h_2h_4,h_3h_4,h_2h_3h_4$. Either its real or
  its complex part is a nontrivial relation with real coefficients. So
  $l_{2,3,4}$ cannot be eight. By Theorem~\ref{thm:coup}, it cannot be
  seven.
  \hfill\qed
\end{proof}

\begin{cor}
  If a bond $\beta$ connects $h_i$ with $h_{i+2}$, the axes of $h_i$,
  $h_{i+1}$ and $h_{i+2}$ are concurrent or satisfy the Bennett
  conditions, compare Theorem~\ref{thm:coup}.d).
\end{cor}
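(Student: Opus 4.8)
The plan is to reduce this corollary to the argument already used in Corollary~\ref{cor:impl}, now applied to the three consecutive axes $h_i,h_{i+1},h_{i+2}$. Concretely, I first want to show that the hypothesis ``$\beta$ connects $h_i$ with $h_{i+2}$'' forces the formal product $F_{i-1,i+2} = (t_i-h_i)(t_{i+1}-h_{i+1})(t_{i+2}-h_{i+2})$ to vanish at $\beta$. Once this is established, the coupling space $L_{i,i+1,i+2}$ cannot be eight-dimensional, and Theorem~\ref{thm:coup} supplies the geometric dichotomy (concurrent axes or Bennett conditions).

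The key step is to evaluate the connection number $k_\beta(i,i+2)$ through the $v_\beta$-identity from the proof of Lemma~\ref{lem:integer}. Setting $j=i+2$ there gives
\[
  k_\beta(i,i+2) = v_\beta(i,i+1) + v_\beta(i-1,i+2) - v_\beta(i,i+2) - v_\beta(i-1,i+1).
\]
I claim three of these four terms vanish. First, $v_\beta(i,i+1)=0$ because $F_{i,i+1}=t_{i+1}-h_{i+1}$ is a single factor whose coordinates never all vanish. Next, $v_\beta(i-1,i+1)=0$ and $v_\beta(i,i+2)=0$: by Lemma~\ref{lem:dbeta}, each of these minimal orders is equivalent to the corresponding instance of Theorem~\ref{thm:local}.c), namely $d_\beta(i-1,i+1)=b_\beta(i)+b_\beta(i+1)$ and $d_\beta(i,i+2)=b_\beta(i+1)+b_\beta(i+2)$ (the latter is Theorem~\ref{thm:local}.c) applied to the middle index $i+1$). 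Hence $k_\beta(i,i+2)=v_\beta(i-1,i+2)$, which is in particular non-negative, so connecting $h_i$ with $h_{i+2}$ means $v_\beta(i-1,i+2)>0$; that is, $F_{i-1,i+2}$ vanishes at $\beta$.

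From here I would repeat the final part of the proof of Corollary~\ref{cor:impl}. Expanding $(t_i-h_i)(t_{i+1}-h_{i+1})(t_{i+2}-h_{i+2})=0$ at $\beta$ yields a nontrivial $\C$-linear relation among the eight vectors $1,h_i,h_{i+1},h_{i+2},h_ih_{i+1},h_ih_{i+2},h_{i+1}h_{i+2},h_ih_{i+1}h_{i+2}$; its real or its imaginary part is a nontrivial relation with real coefficients, so these eight real vectors are linearly dependent and $l_{i,i+1,i+2}\le 7$, hence $l_{i,i+1,i+2}\le 6$ by the parity statement Theorem~\ref{thm:coup}.a). Since $L_{i,i+1,i+2}\supseteq L_{i,i+1}$ and $l_{i,i+1}=4$ by Theorem~\ref{thm:coup}.b), the only remaining possibilities are $l_{i,i+1,i+2}\in\{4,6\}$. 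Theorem~\ref{thm:coup}.c) disposes of the first case (the axes are concurrent) and Theorem~\ref{thm:coup}.d) of the second (they satisfy the Bennett conditions), which is precisely the assertion.

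The one genuinely delicate point is the bookkeeping in the middle paragraph: the whole argument hinges on the three auxiliary minimal orders being exactly zero. This is where the standing hypothesis of incompatible consecutive axes re-enters, through Theorem~\ref{thm:coup}.b) as mediated by Theorem~\ref{thm:local}.c). Everything after the vanishing of $F_{i-1,i+2}$ is an essentially verbatim reuse of Corollary~\ref{cor:impl}, and no new estimates or geometric input are required beyond Theorem~\ref{thm:coup}.
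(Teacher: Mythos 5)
Your proof is correct and takes essentially the same route as the paper's: both arguments reduce the hypothesis to the vanishing of $F_{i-1,i+2}$ at the bond and then conclude with the linear-algebra step of Corollary~\ref{cor:impl} and the dichotomy of Theorem~\ref{thm:coup}. The only cosmetic difference is that you evaluate the connection number via the $v_\beta$-identity from Lemma~\ref{lem:integer}, whereas the paper manipulates local distances using Theorem~\ref{thm:local}.c) and then invokes Corollary~\ref{cor:impl}; by Lemma~\ref{lem:dbeta} these are the same computation in different coordinates.
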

\begin{proof}
  Without loss of generality, we assume $i = 2$. The connection number
  $k_\beta(i,i+2)$ is positive, that is, $d_\beta(2,4) + d_\beta(1,3)
  - d_\beta(2,3) - d_\beta(1,4) > 0$. Using Theorem~\ref{thm:bond}.d),
  we find
  \begin{equation*}
    \begin{aligned}
      d_\beta(1,4) & < d_\beta(2,4) + d_\beta(1,3) - d_\beta(2,3) \\
                   & = d_\beta(2,3) + d_\beta(3,4) + d_\beta(1,2) + d_\beta(2,3) - d_\beta(2,3) \\
                   & = d_\beta(1,2) + d_\beta(2,3) + d_\beta(3,4).
    \end{aligned}
  \end{equation*}
  By Corollary~\ref{cor:impl}, this implies $l_{2,3,4} \le 6$ and the
  claim follows from Theorem~\ref{thm:coup}.
\end{proof}

\begin{cor}
  \label{cor:actuated}
  If a joint $h_i$ is connected with multiplicity one to exactly one
  other joint, then the configuration curve can be parametrized by
  $t_i$, or, equivalently, by the rotation angle at $h_i$.
\end{cor}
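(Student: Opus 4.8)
The plan is to reduce the claim to a statement about the joint length $b(i)$, using the remark following Definition~\ref{def:distance}: there it is already asserted that $b(i)=1$ forces \emph{all} coupling curves to be parametrizable by the revolute angle at $h_i$, because that angle then determines the configuration unambiguously. Hence the entire task is to show that the hypothesis implies $b(i)=1$, after which the desired parametrization of the configuration curve by $t_i$ is immediate.

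First I would recall why $b(i)$ is the right quantity. By definition $b(i)=d(i-1,i)=\deg(C_{i-1,i})\deg(f_{i-1,i})$, and the coupling curve $C_{i-1,i}$ is the line of rotations about the axis $h_i$, so $\deg(C_{i-1,i})=1$ and $b(i)=\deg(f_{i-1,i})$, the generic cardinality of a fibre of the coupling map. Thus $b(i)=1$ says exactly that $t_i$ generically determines the whole configuration, which is precisely the content the cited remark converts into the parametrization.

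Next I would read off $b(i)=d(i-1,i)$ from the bond diagram via Theorem~\ref{thm:degree}. The two vertices at which one cuts, $o_{i-1}$ and $o_i$, are adjacent: in the link cycle they are joined precisely by the edge $h_i$. Consequently one of the two chain graphs produced by the cut consists of the single edge $h_i$, while the other carries all remaining edges $h_{i+1},\ldots,h_{i-1}$. A connecting line of the bond diagram therefore crosses between the two components if and only if one of its two endpoints is the edge $h_i$. Theorem~\ref{thm:degree} then yields $b(i)=d(i-1,i)=\sum_{s\ne i}k(i,s)$, i.e.\ the total multiplicity of all connections incident to $h_i$ in the bond diagram.

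Finally I would invoke the hypothesis. The statement that $h_i$ is connected with multiplicity one to exactly one other joint says precisely that this total incident multiplicity equals one, so $b(i)=1$, and the remark after Definition~\ref{def:distance} delivers the parametrization by $t_i$. The one step I expect to require the most care is the identification of $b(i)$ with the total incident connection multiplicity: one must verify that cutting a cycle at the two \emph{adjacent} vertices $o_{i-1},o_i$ genuinely isolates the single edge $h_i$, so that the crossing connections counted by Theorem~\ref{thm:degree} are exactly those meeting $h_i$. Everything else follows directly from results already established.
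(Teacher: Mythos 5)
Your proposal is correct and follows essentially the same route as the paper: the paper's proof likewise reduces the hypothesis to $b(i)=1$ and then observes that the coupling map $f_{i-1,i}$ is birational, its inverse giving the parametrization. You merely spell out, via Theorem~\ref{thm:degree}, the equivalence between the hypothesis and $b(i)=1$ that the paper states without elaboration.
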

\begin{proof}
  The assumption is equivalent to $b(i)=1$. If this is true,
  then the coupling map $f_{i-1,i}$ is birational, and its inverse
  is the desired parametrization. 
\end{proof}

\begin{cor}
  If two joints $h_i$, $h_j$ of length $d(i,j) = 1$ are connected with
  multiplicity one to each other and they are not connected to other
  joints, then $t_i = \pm t_j$ holds for all points of the
  configuration curve.
\end{cor}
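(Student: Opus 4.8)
The plan is to reduce the claim to a statement about one Möbius transformation and then to pin that transformation down with the bond structure. By hypothesis the joint $h_i$ is connected with multiplicity one to exactly one other joint, namely $h_j$; hence Corollary~\ref{cor:actuated} applies and $t_i$ parametrizes the configuration curve, i.e.\ the projection $t_i\colon\normalization(K)\to\P^1$ is birational, and in particular $b(i)=1$. Exchanging the roles of $i$ and $j$ gives $b(j)=1$ and a birational parametrization by $t_j$ as well. Consequently $K$ is rational and there is a Möbius transformation $\phi$ with $t_j=\phi(t_i)$; since $K$ and both coordinate functions are real, $\phi$ has real coefficients.

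Next I would compare the divisors of the two degree-two functions $t_i^2+1$ and $t_j^2+1$ on $\normalization(K)$. By Theorem~\ref{thm:i} every bond has some coordinate equal to $\pm\ci$, and a bond contributes to the vanishing of $t_i^2+1$ precisely when it connects $h_i$ to another joint. As $h_i$ is connected only to $h_j$ and with multiplicity one, the value $b(i)=1$ accounts for exactly one conjugate pair $\{\beta,\bar\beta\}$ with simple zeros, so $t_i^2+1$ vanishes exactly at $\beta,\bar\beta$; there $t_j^2+1$ vanishes too, and symmetrically $t_j^2+1$ has no further zeros. In particular $\phi$ sends $\{\ci,-\ci\}$ into itself. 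Thus $t_i^2+1$ and $t_j^2+1$ share the zero divisor $\beta+\bar\beta$, so their quotient has zeros and poles only at the points where $t_i=\infty$ and where $t_j=\infty$. Writing $\phi(t_i)=\frac{at_i+b}{ct_i+d}$ this reads $t_j^2+1=\frac{a'(t_i^2+1)}{(ct_i+d)^2}$ for a constant $a'$. If one knows in addition that $\phi(\infty)=\infty$, i.e.\ $c=0$, then comparing coefficients in $(at_i+b)^2+d^2=a'(t_i^2+1)$ forces $b=0$ and $d=\pm a$, hence $t_j=\pm t_i$; indeed, a real $\phi$ fixing $\infty$ and stabilizing $\{\ci,-\ci\}$ is automatically of the form $t\mapsto\pm t$.

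The crux is therefore to show $t_i=\infty\Leftrightarrow t_j=\infty$, that is, that joint $h_i$ is at the identity rotation exactly when $h_j$ is. Here I would use the remaining hypothesis $d(i,j)=1$: since $d(i,j)=\deg(C_{i,j})\deg(f_{i,j})=1$, the coupling curve $C_{i,j}$ is a line on the Study quadric and $f_{i,j}$ is birational, so the relative motion of $o_j$ with respect to $o_i$ is a rotation about a fixed axis $g$, whose identity pose is attained at the single value $u=\infty$ of its rotation parameter $u$. When $d(i-1,j)=d(i,j-1)=0$ the coupling map is, up to a constant factor, linear in $t_j$ (namely $f_{i,j}=c\,(t_j-h_j)$) and, read from the other side, linear in $t_i$; in both descriptions the identity pose is reached only as $t_j\to\infty$ respectively $t_i\to\infty$, which yields $u=\infty\Leftrightarrow t_i=\infty\Leftrightarrow t_j=\infty$ and finishes the argument. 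The main obstacle is the general situation, in which further bonds of the linkage may make $d(i-1,j)$ or $d(i,j-1)$ positive, so that $f_{i,j}$ is only a Möbius, not an affine, function of $t_i$; I would resolve this by a local analysis of $f_{i,j}$ at the identity pose, showing that its unique preimage on $\normalization(K)$ still has both $t_i=\infty$ and $t_j=\infty$.
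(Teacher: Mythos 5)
Your overall route coincides with the paper's: apply Corollary~\ref{cor:actuated} twice to obtain birational coordinates $t_i$ and $t_j$ on $\normalization(K)$, deduce a real M\"obius transformation $\phi$ with $t_j=\phi(t_i)$, show that $\phi$ permutes $\{\ci,-\ci\}$ (your divisor comparison of $t_i^2+1$ and $t_j^2+1$ via the bond structure is exactly the paper's ``takes the zeroes of $t_i^2+1$ to the zeroes of $t_j^2+1$''), and conclude $t_j=\pm t_i$ once $\phi(\infty)=\infty$ is known. Your closing algebra is correct: a real M\"obius map that fixes $\infty$ and stabilizes $\{\ci,-\ci\}$ is necessarily $t\mapsto\pm t$.

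The gap is the one you flag yourself: $\phi(\infty)=\infty$ is never actually established, and it is genuinely needed --- the real M\"obius maps stabilizing $\{\ci,-\ci\}$ form a one-parameter family, $t\mapsto(at+b)/(-bt+a)$ together with its reflected counterpart, and almost none of these fix $\infty$, so without this step the conclusion could degrade to ``$t_j$ is obtained from $t_i$ by a fixed elliptic rotation.'' Your proposed repair via $d(i,j)=1$ and a local analysis of $f_{i,j}$ near the identity pose is left as a sketch precisely at the case that matters, and it is more roundabout than necessary. The step closes directly: the configuration $(\infty,\ldots,\infty)$ satisfies the closure condition \eqref{eq:1} (every factor becomes $1$), so it is a point of the configuration curve; since $b(i)=1$ makes $t_i$ a birational coordinate on $\normalization(K)$, it is the \emph{unique} point with $t_i=\infty$, and by the same token the unique point with $t_j=\infty$ --- hence these two points coincide and $\phi(\infty)=\infty$. (To be fair, the paper's own proof also asserts ``this equivalence fixes $\infty$'' without comment; it does not use your $d(i,j)=1$ detour at all.) With that one observation inserted in place of your unfinished local analysis, your argument is complete and matches the paper's.
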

\begin{proof}
  This follows from Corollary~\ref{cor:actuated} and the fact that the
  configuration curve can be parametrized birationally by $t_i$ or
  by $t_j$. Hence there is a projective equivalence relating $t_i$ and
  $t_j$. This equivalence fixes $\infty$ and takes the zeroes of
  $t_i^2+1$ to the zeroes of $t_j^2+1$. This already implies $t_j=\pm
  t_i$.
\end{proof}

\subsection{More examples}
\label{sec:more-examples}

In this subsection we present three more examples of overconstrained
6R linkages and their bond diagrams. Apparently, the linkages in
Examples~\ref{ex:11} and \ref{ex:12} are new.

\begin{example}
  \label{ex:11}
  We use the method of factorizing motion polynomials~\cite{HSS,HSS2}
  to construct a 6R linkage as follows. First, we choose two arbitrary
  rotation polynomials $h_1$, $h_2$ with non-concurrent axes, say
  $h_1=\qi$ and $h_2=\eps\qi+\qj$. Then we choose a random linear combination
  of $1$ and $h_2$, say $h'_2=1+h_2$, and factor the quadratic motion
  polynomial as $P(t) = (t-h_1)(t-h_2')$. We compute a second factorization
  $P(t) = (t-1-g_1)(t-g_2)$. Then we choose another random linear combination of
  $1$ and $h_2$, say $h''_2=2+h_2$, and factor the motion polynomial
  $Q(t) = (t-g_2)(t-h''_2)$. We get a second factorization
  $Q(t) = (t-2-h_4)(t-h_3)$. Next we choose a random linear combination of $1$ and $h_4$,
  say $h_4'=3+h_4$, and we factor the motion polynomial
  $R(t) = (t-1-g_1)(t-3-h_4)$. We get a second factorization
  $R(t) = (t-3-h_6)(t-1-h_5)$. We obtain a six-bar linkage $L =
  (h_1,h_2,h_3,h_4,h_5,h_6)$ with configuration curve
  \begin{equation*}
    (t_1, t_2, t_3, t_4, t_5, t_6) = \Bigl( t, \frac{t^2-3t+1}{2t-3}, -t, t^2-5t+7, -t+1, -t+3 \Bigr).
  \end{equation*}
  The bonds are
  \begin{equation*}
    \begin{aligned}
      (3\pm\ci,\tfrac{6}{13}\pm\tfrac{9}{13}\ci,-3\mp\ci,\pm\ci,-2\mp\ci,\mp\ci),  \\
      (\pm\ci,-\tfrac{6}{13}\pm\tfrac{9}{13}\ci,\mp\ci,6\mp 5\ci,1\mp\ci,3\mp\ci),  \\
      (2\pm\ci,\pm\ci,-2\mp\ci,\mp\ci,-1\mp\ci,1\mp\ci), \\
      (1\pm\ci,\pm\ci,-1\mp\ci,2\mp 3\ci,\mp\ci,2\mp\ci).
    \end{aligned}
  \end{equation*}
  We have four pairs of conjugate complex bonds. All of them are
  elementary, the bond diagram is given in
  Figure~\ref{fig:more-bond-diagrams}.a).

  The reason why we think that this concrete linkage is new is the
  following. For $i=1,\dots,6$, let $a_i$ be the distance of
  consecutive revolute axes $r_i,r_{i+1}$, let $\alpha_i$ be the angle
  between the axes $r_i,r_{i+1}$, and let $s_i$ be the distance
  between the two points on $r_i$ which are in shortest distance to
  $r_{i-1}$ and $r_{i+1}$. For almost all examples we found in the
  literature, there is at least one equality between the $a_i$, or at
  least one angle $\alpha_i$ which is zero or a right angle, or an
  index $i$ such that $s_i=s_{i+1}=s_{i+2}=0$. There are only two
  exceptions, namely Waldron's double Bennett (see \cite{Dietmaier})
  and the linkage of \cite{HSS,HSS2}. In our example, the numbers
  $a_i$, $i=1,\dots,6$, are pairwise distinct, the angles are neither
  zero nor right angles, and the only vanishing offsets are
  $s_2=s_3=s_5=0$. Hence our example is not a special case of a known
  family, except possibly the double Bennett and the linkages of
  \cite{HSS,HSS2}. But these two families have a different bond
  structure as shown in Figure~\ref{fig:even-more-bond-diagrams}. So,
  our linkage is also not a special case of these linkages.\hfill\eoex
\end{example}

\begin{figure*}
  \centering
  \includegraphics{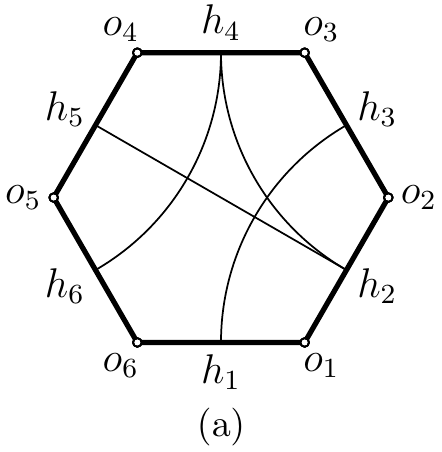}
  \quad
  \includegraphics{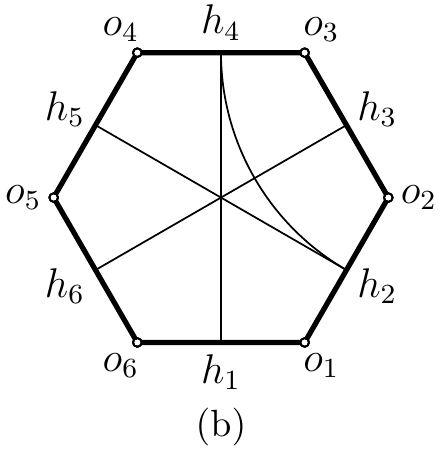}
  \quad
  \includegraphics{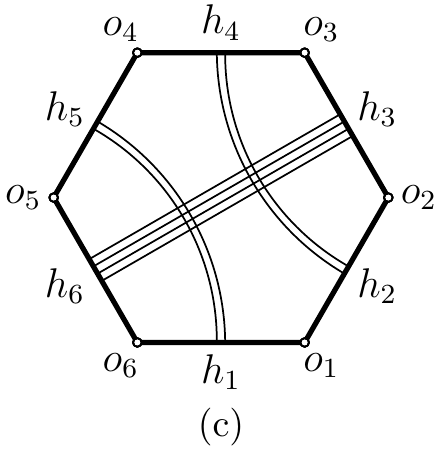}
  \caption{Bond diagrams for the linkages of Examples~\ref{ex:11},
    \ref{ex:12}, and~\ref{ex:13}}
  \label{fig:more-bond-diagrams}
\end{figure*}

\begin{figure*}
  \centering
  \includegraphics{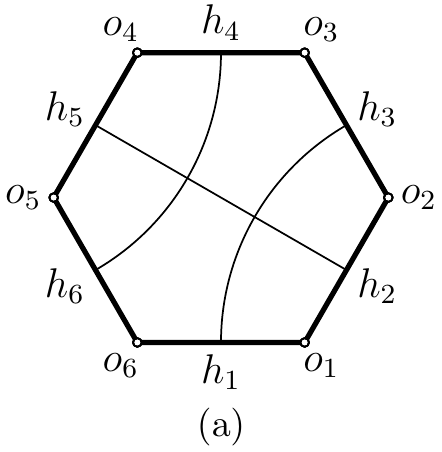}
  \quad
  \includegraphics{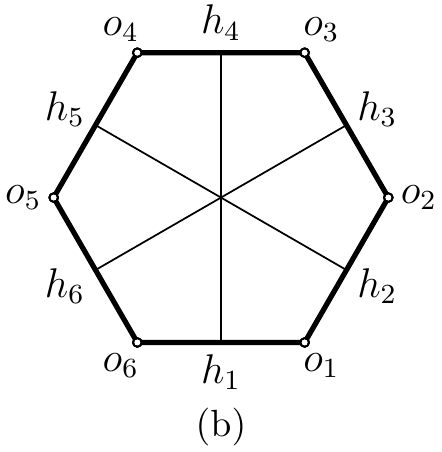}
  \quad
  \caption{Bond diagrams for Waldron's double Bennett linkage and for
    the linkage of \cite{HSS,HSS2}}
  \label{fig:even-more-bond-diagrams}
\end{figure*}

\begin{example}
  \label{ex:12}
  Starting from $h_1,\dots,h_6$ as in the example above, we factor the
  motion polynomial $S(t) = (t+h_1)(t-3-h_6)$ and get a second
  factorization $S(t) = (t-3-h_1')(t-h_6')$. The six-bar linkage $L =
  (h_1',h_2,h_3,h_4,h_5,h_6')$ has configuration curve
  \begin{equation*}
    (t_1, t_2, t_3, t_4, t_5, t_6) =
    (3+t, -\frac{t^2+3t+1}{2t+3}, t, t^2+5t+7, t+1, t).
  \end{equation*}
  The bonds are
  \begin{equation*}
  \begin{aligned}
    (\mp\ci,\tfrac{6}{13}\pm\tfrac{9}{13}\ci,-3\mp\ci,\pm\ci,-2\mp\ci,-3\mp\ci),  \\
    (3\mp\ci,-\tfrac{6}{13}\pm\tfrac{9}{13}\ci,\mp\ci,6\mp 5\ci,1\mp\ci,\mp\ci),  \\
    (1\mp\ci,\pm\ci,-2\mp\ci,\mp\ci,-1\mp\ci,-2\mp\ci), \\
    (2\mp\ci,\pm\ci,-1\mp\ci,2\mp 3\ci,\mp\ci,-1\mp\ci),
  \end{aligned}
  \end{equation*}
  the corresponding bond diagram is shown in
  Figure~\ref{fig:more-bond-diagrams}.b).

  The reason why we think that this example is new is the same as for Example~\ref{ex:12},
  except that now we have only one vanishing offset $s_3=0$ (so we would not need to worry
  about the double Bennett linkage). 
 \hfill\eoex
\end{example}

\begin{example}[Bricard's plane symmetric linkage]
  \label{ex:13}
  We set 
  \begin{equation*}
  \begin{aligned}
  h_1 &= \frac{4}{9}\qi-\frac{17}{27}\eps\qi-\frac{7}{9}\qj-\frac{4}{27}\eps\qj+\frac{4}{9}\qk+\frac{10}{27}\eps\qk,\\
  h_2 &= \frac{3}{7}\qi+\frac{32}{49}\eps\qi+\frac{2}{7}\qj+\frac{12}{49}\eps\qj+\frac{6}{7}\qk-\frac{20}{49}\eps\qk,\\
  h_5 &= \frac{4}{9}\qi+\frac{17}{27}\eps\qi+\frac{7}{9}\qj-\frac{4}{27}\eps\qj-\frac{4}{9}\qk+\frac{10}{27}\eps\qk,\\
  h_4 &= \frac{3}{7}\qi-\frac{32}{49}\eps\qi-\frac{2}{7}\qj+\frac{12}{49}\eps\qj-\frac{6}{7}\qk-\frac{20}{49}\eps\qk,\\
  h_3 &= \qk,\ \ \ h_6=\qj.
  \end{aligned}
  \end{equation*}
  It can be seen that the axes of $h_3$, $h_6$ lie in a plane, and the
  axes of $h_1$, $h_5$ and $h_2$, $h_4$, respectively, are symmetric
  with respect to this plane. Thus, we have an example of Bricard's
  plane symmetric linkage \cite{Baker80},
  \cite[pp.~91--92]{Dietmaier}. The configuration curve has genus one,
  hence it is not parametrizable by polynomials. For the whole
  configuration curve, we have $t_1=-t_5$ and $t_2=-t_4$. One observes
  that the bonds follow the following pattern:
  \begin{gather*}
    (\pm\ci,\ast,\ast,\ast,\mp\ci,\ast), \quad
    (\ast,\pm\ci,\ast,\mp\ci,\ast,\ast), \quad
    (\ast,\ast,\pm\ci,\ast,\ast,\pm\ci), \quad
    (\ast,\ast,\pm\ci,\ast,\ast,\mp\ci)
  \end{gather*}
  (the $\ast$ signs denote complex numbers, all different, with real
  and imaginary part different from zero). The bond diagram is shown
  in Figure~\ref{fig:more-bond-diagrams}.c).
  \hfill\eoex
\end{example}

\section{Classification of closed 5R chains}
\label{sec:classification}

As an application of bond theory, we give a proof of Karger's
classification of overconstrained closed 5R linkages \cite{Karger}.
The main statement is that any non-trivial linkage of this type is a
Goldberg linkage. We also compute the degree of the coupling motions
of Goldberg's linkage.

In the following we suppose that the linkage $L=(h_1,\dots,h_5)$ is a
closed 5R chain with mobility one, which is neither planar nor
spherical, i.e.\ not all five axes are parallel or meet in a point. We
also assume that any two consecutive axes are distinct, and that no
coupling map is constant (for instance $L$ is not a 4R linkage plus
one fixed link). A 5R linkage fulfilling these conditions is called a
non-degenerate 5R linkage.

\begin{lem}
  \label{lem:gt4}
  All coupling dimensions $l_{i,\ldots,j}$ in a non-degenerate 5R
  linkage are greater than four.
\end{lem}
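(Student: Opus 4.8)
The plan is to prove the statement by contradiction, leveraging Theorem~\ref{thm:coup} to translate a small coupling dimension into strong geometric constraints on the revolute axes, and then showing those constraints force the linkage to be degenerate (planar, spherical, or with a constant coupling map). By Theorem~\ref{thm:coup}.a), every coupling dimension $l_{i,\ldots,j}$ is even, and by Theorem~\ref{thm:coup}.b) we always have $l_{i,i+1}=4$. So the only way a coupling dimension can fail to exceed four is if some triple $l_{i,i+1,i+2}=4$ (or, more generally, a longer consecutive block collapses to dimension four). First I would observe that if a longer block $L_{i,\ldots,j}$ with $j-i\ge 3$ had dimension four, then it would already contain the dimension-four space $L_{i,i+1,i+2}$ as a subspace of equal dimension, forcing $l_{i,i+1,i+2}=4$ as well. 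Thus it suffices to rule out $l_{i,i+1,i+2}=4$ for every consecutive triple.

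Next I would apply Theorem~\ref{thm:coup}.c): if $l_{i,i+1,i+2}=4$, then the three axes of $h_i$, $h_{i+1}$, $h_{i+2}$ are concurrent (all parallel or meeting in a common point). The heart of the argument is then to show that in a non-degenerate 5R linkage no three consecutive axes can be concurrent. Suppose, for contradiction, that some consecutive triple, say $h_1,h_2,h_3$, is concurrent. I would argue that concurrency of a consecutive triple forces the coupling map describing the relative motion across that triple to degenerate: intuitively, three axes through a common point (or three parallel axes) behave like a spherical (respectively planar) sub-linkage, so the relative pose of link $o_3$ with respect to $o_i$ across these three joints is confined to a rotation subgroup fixing that point (or to planar displacements). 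This should contradict either the non-constancy of the coupling maps or the assumption that the whole linkage is neither planar nor spherical.

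The hard part will be making the ``concurrency forces degeneracy'' step rigorous for a single interior triple rather than for all five axes at once. The cleanest route is probably to combine concurrency with the mobility-one closure condition: if $h_1,h_2,h_3$ are concurrent, then the product $(t_1-h_1)(t_2-h_2)(t_3-h_3)$ stays in a proper subalgebra of $\D\H$ (the subalgebra associated with rotations fixing the common point, isomorphic to $\H$, or the translation/planar subalgebra in the parallel case). I would then feed this back into the closure equation \eqref{eq:1} for the whole 5R chain and show that it forces the remaining two joints $h_4,h_5$ to satisfy a relation collapsing the mobility or rendering a coupling map constant, contradicting non-degeneracy. A subtlety to watch is the parallel case versus the common-point case, which may require slightly different subalgebra arguments; I would treat the common-point case via the norm-and-trace characterization of the relevant subalgebra and handle the parallel case by passing to the exceptional plane $E$ and the translation structure described in Section~\ref{sec:dual-quaternions}.

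Finally, once every consecutive triple is shown to be non-concurrent, Theorem~\ref{thm:coup}.c) rules out $l_{i,i+1,i+2}=4$, and combined with the evenness from part~a) and the reduction in the first paragraph, we conclude $l_{i,\ldots,j}>4$ for all consecutive blocks, which is exactly the claim. I expect the geometric reduction step to be the only place where the full strength of the non-degeneracy hypotheses (in particular, the non-constancy of coupling maps) is genuinely needed.
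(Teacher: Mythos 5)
Your skeleton coincides with the paper's: reduce to consecutive triples (any longer block of dimension four contains $L_{i,i+1}$ of dimension four, hence equals it, hence forces the triple inside to have dimension four), invoke Theorem~\ref{thm:coup}.c) to conclude that the three axes are concurrent, and then derive a contradiction with non-degeneracy. The first half of your ``concurrency forces degeneracy'' step is also right and is exactly half of the paper's argument: the relative pose of $o_3$ with respect to $o_5$ is a product of rotations all fixing the common point $O$ (or all parallel to a common direction), so the coupling curve $C_{3,5}$ lies in the spherical (or planar) subgroup attached to the triple.

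The gap is the second half. You propose to ``feed this back into the closure equation \eqref{eq:1} and show that it forces $h_4,h_5$ to satisfy a relation collapsing the mobility,'' but you never identify what that relation is, and the closure equation by itself does not obviously produce one. The mechanism the paper uses is the two-sided description of this \emph{single} coupling curve: besides lying in $X_{3,2,1}$ (rotations fixing $O$), the curve $C_{3,5}$ lies in $X_{4,5}$, hence in the four-dimensional coupling space $L_{4,5}=\langle 1,h_4,h_5,h_4h_5\rangle$. The rotations representable in $L_{4,5}$ are either only those about $h_4$ or about $h_5$ (when the two axes are skew) or those about axes through their common point $O'$ (when they are concurrent). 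Since the coupling map is non-constant, comparing the two containments forces $O=O'$, so all five axes pass through $O$ and the linkage is spherical (or planar if $O$ is at infinity), contradicting non-degeneracy. Without this comparison of the two descriptions of $C_{3,5}$, concurrency of $h_1,h_2,h_3$ alone does not visibly contradict any hypothesis --- you still have to explain why the remaining two axes are dragged through $O$ --- so as written your argument stops short of a proof at precisely the step you flag as ``the hard part.''
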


\begin{proof}
  Assume that there exists a coupling dimension which is four, say
  $l_{1,2,3}=4$. Then it follows from Theorem \ref{thm:coup}.c) that
  the axes of $h_1$, $h_2$, $h_3$ intersect in a common point $O$,
  possibly at infinity. The coupling curve $C_{5,4}$ contains only
  rotations around axes through $O$. On the other hand, $L_{5,4}$
  contains only either rotations around $h_4$ and $h_5$ (if these two
  axes are skew) or rotations around axes through the common
  intersection point $O'$ (possibly at infinity), if $h_4$ and $h_5$
  are not skew. Hence $O = O'$ and $L$ is a planar or spherical
  linkage.
\end{proof}

\begin{lem}
  \label{lem:bir}
  For $i \in [n]$ the coupling map $f_{i,i+2}$ is injective.
\end{lem}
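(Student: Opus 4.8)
The plan is to factor the coupling map through the product of the two rotations it is built from and to exploit the fact that this product map is a Segre embedding. Concretely, $f_{i,i+2}$ sends a configuration $\tau\in K$ to the class of $F_{i,i+2}(\tau)=(t_{i+1}-h_{i+1})(t_{i+2}-h_{i+2})$, so it is the composition of the projection $\pi\colon K\to(\P^1)^2$, $\tau\mapsto(t_{i+1},t_{i+2})$, with the product map $P\colon(\P^1)^2\to\P^7$, $(t_{i+1},t_{i+2})\mapsto[(t_{i+1}-h_{i+1})(t_{i+2}-h_{i+2})]$. I would therefore deduce injectivity of $f_{i,i+2}$ from injectivity of $P$ together with injectivity of $\pi$.

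First I would treat $P$. By Theorem~\ref{thm:coup}.b the coupling dimension $l_{i+1,i+2}$ equals $4$, so the dual quaternions $1,h_{i+1},h_{i+2},h_{i+1}h_{i+2}$ form a basis of the coupling space $L_{i+1,i+2}$. Writing the product in homogeneous joint coordinates $(u_{i+1}:v_{i+1})$, $(u_{i+2}:v_{i+2})$ gives the coordinate vector $(u_{i+1}u_{i+2},-v_{i+1}u_{i+2},-u_{i+1}v_{i+2},v_{i+1}v_{i+2})$ in this basis, which is exactly the Segre embedding $\P^1\times\P^1\hookrightarrow\P(L_{i+1,i+2})\cong\P^3$ onto the smooth quadric $X_{i+1,i+2}$. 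Hence $P$ is an isomorphism onto its image; in particular it is injective, and one recovers $t_{i+1}$ and $t_{i+2}$ from the homogeneous coordinates of the product. This reduces the claim to showing that $\pi$ is injective, i.e.\ that no two distinct points of $K$ share the same pair $(t_{i+1},t_{i+2})$.

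The hard part is this last reduction. Fixing $t_{i+1}$ and $t_{i+2}$ locks two consecutive joints and prescribes the relative pose of $o_{i+2}$ with respect to $o_i$; what remains is the complementary closed chain built from the three joints $h_{i+3},h_{i+4},h_i$ whose two end links are held in this fixed relative position. I would argue that this reduced $3$R problem admits only one solution along the configuration curve, so that $\pi^{-1}(t_{i+1},t_{i+2})$ is a single point. The natural tool here is Lemma~\ref{lem:gt4}: a second configuration sharing $(t_{i+1},t_{i+2})$ would produce a degenerate coincidence among the three remaining rotations, and I expect this to force one of the coupling dimensions down to $4$, contradicting non-degeneracy. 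Equivalently, one may try to show that the complementary coupling map (the inverse of $f_{i,i+2}$, expressed through the three other joints) already separates these configurations. Making this rigidity argument precise — ruling out a genuine second branch of the reduced chain without circular appeal to the very statement being proved — is the step I anticipate requiring the most care.
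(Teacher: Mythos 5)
Your first step is exactly the paper's: the two-joint product $(t_{i+1}-h_{i+1})(t_{i+2}-h_{i+2})$ is a Segre embedding of $(\P^1)^2$ into $\P(L_{i+1,i+2})\cong\P^3$ (Theorem~\ref{thm:coup}.b), so the image point determines $t_{i+1}$ and $t_{i+2}$. But the part you defer is the actual content of the lemma, and the completion you sketch does not go through as stated. The paper's argument is the second of the two options you mention in passing: by the closure condition \eqref{eq:1}, the \emph{same} coupling map $f_{i,i+2}$ is also given (see \eqref{eq:4}) by the product over the complementary three joints, $G_{i,i+2}=(t_i+h_i)(t_{i-1}+h_{i-1})(t_{i-2}+h_{i-2})$, and the parametrization $(\P^1)^3\to X_{i,i-1,i-2}$ is injective because $l_{i,i-1,i-2}>4$ (Lemma~\ref{lem:gt4}). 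Hence a single point of $C_{i,i+2}$, lying in $X_{i+1,i+2}\cap X_{i,i-1,i-2}$, determines all five parameters at once; there is no separate ``rigidity of the reduced 3R chain'' to prove. You name this route but explicitly leave it unexecuted, so the proof is not complete.

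The fallback you do flesh out --- that a second configuration sharing $(t_{i+1},t_{i+2})$ would ``force one of the coupling dimensions down to 4,'' contradicting Lemma~\ref{lem:gt4} --- is too strong and would fail. Non-injectivity of the three-joint product map immediately contradicts only $l_{i,i-1,i-2}=8$ (where the map is a genuine Segre embedding); it does not by itself push the dimension below $6$. The value $l=6$ is fully consistent with non-degeneracy and in fact occurs in the linkages you ultimately care about (Lemma~\ref{lem:main} assumes $l_{1,2,3}=l_{3,4,5}=6$ and invokes this very lemma), so an argument that only rules out a second preimage when $l=8$ proves nothing in the relevant case. What is genuinely needed, and what the paper supplies, is the finer claim that $l_{i,i-1,i-2}>4$ already suffices for injectivity of $(t_i,t_{i-1},t_{i-2})\mapsto(t_i-h_i)(t_{i-1}-h_{i-1})(t_{i-2}-h_{i-2})$; that is the statement your proposal would still have to establish.
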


\begin{proof}
  Without loss of generality, we assume $i = 5$. The parametrization
  $(\P^1)^2\to X_{1,2}$, $(t_1,t_2)\mapsto (t_1-h_1)(t_2-h_2)$ is
  injective. Similarly, it follows from $l_{5,4,3}>4$ that the
  parametrization $(\P^1)^3\to X_{5,4,3}$, $(t_5,t_4,t_3)\mapsto
  (t_5-h_5)(t_4-h_4)(t_3-h_3)$ is injective. Hence there can be at
  most one configuration $(t_1,t_2,t_3,t_4,t_5)$ that maps into some
  point in the intersection $X_{1,2}\cap X_{5,4,1}$.
\end{proof}

\begin{lem} \label{lem:ne} Let $h_1,\ldots,h_6$ be six half-turns such
  that $L_{1,2,3} = L_{4,5,6} =: L$ and $\dim(L)=6$. Then $h_1=\pm h_4$
  and $h_3=\pm h_6$.
\end{lem}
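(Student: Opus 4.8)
The plan is to exploit that the common space $L$ is a module, on both sides, over the two‑dimensional algebras attached to its boundary joints. Since $L_{1,2,3}$ is closed under left multiplication by $h_1$ and under right multiplication by $h_3$, and $L_{4,5,6}=L$ is closed under left multiplication by $h_4$ and under right multiplication by $h_6$, the space $L$ is stable under left multiplication by both $h_1$ and $h_4$ and under right multiplication by both $h_3$ and $h_6$. Because the empty product $1$ lies in $L$, the left multiplier set $A:=\{g\in\D\H: gL\subseteq L\}$ satisfies $A=A\cdot 1\subseteq L$ (each $g=g\cdot 1\in gL\subseteq L$); thus $A$ is a \emph{proper} subalgebra of $\D\H$ containing the half‑turns $h_1,h_4$, and the analogous right multiplier algebra $A'\subseteq L$ contains $h_3,h_6$. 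Since a half‑turn lying in $L_1=\{a+bh_1:a,b\in\R\}$ has scalar part $a$, forcing $a=0$, and then norm $b^2=1$, it must equal $\pm h_1$; hence it suffices to prove $h_4=\pm h_1$, the statement $h_6=\pm h_3$ being identical with ``left'' replaced by ``right''.

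First I would locate the axes of $h_1$ and $h_4$. By the fact used in the proof of Theorem~\ref{thm:coup}.c), that no proper subalgebra of $\D\H$ contains two skew lines, the half‑turns $h_1,h_4\in A$ are concurrent. Set $S:=\langle 1,h_1,h_4\rangle\subseteq A$. If the two axes are distinct and meet in a finite point, then translating that point to the origin turns $h_1,h_4$ into distinct pure unit quaternions, which generate all of $\H$; hence $S$ is four‑dimensional and isomorphic to the quaternions. As $S\subseteq A$, the space $L$ is a left module over the division algebra $\H\cong S$, so $\dim_\R L$ must be divisible by four, contradicting $\dim_\R L=6$. Therefore $h_1$ and $h_4$ either share an axis, which already gives $h_4=\pm h_1$, or have parallel axes.

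The parallel case is the delicate one, since there $S$ is the four‑dimensional algebra of planar displacements, which does admit six‑dimensional modules, so the parity count alone fails. To break it I would pass to primal parts. Let $\pi\colon\D\H\to\H$ be the projection onto the primal part; after a rotation we may assume both axes have direction $\qi$, so $h_1=\qi+\eps a$ and $h_4=\qi+\eps b$ with $a,b\perp\qi$, and $\eps c:=h_4-h_1\in A$ with $0\neq c=b-a\perp\qi$. For every $x\in L$ we have $\eps c\cdot x=\eps c\,\pi(x)\in L$, hence $\eps c\,\pi(L)\subseteq L$. Because the axes of $h_1$ and $h_2$ are neither equal nor parallel (consecutive axes of a spatial Bennett triple, cf.\ Theorem~\ref{thm:coup}.d)), the elements $1,\pi(h_1),\pi(h_2),\pi(h_1)\pi(h_2)$ span $\H$, whence $\pi(L)=\H$ and, as $c$ is invertible, $\eps c\,\pi(L)=\eps\H$. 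Thus $\eps\H\subseteq L$; but then $\pi(L)\cong L/(L\cap\eps\H)$ has real dimension $6-4=2$, contradicting $\pi(L)=\H$. Hence the parallel case cannot occur and $h_4=\pm h_1$. Repeating the argument verbatim on the right, with $A'$ and the pair $h_3,h_6$ (and using $\pi(L)=\H$ again), yields $h_6=\pm h_3$. The only genuinely subtle step is this parallel sub‑case, where the clean module‑dimension obstruction must be replaced by the collapse of the primal part forced by $\eps\H\subseteq L$.
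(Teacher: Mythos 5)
Your proof is correct in substance and begins with the same reduction as the paper: the left-multiplier set $A=\{g:gL\subseteq L\}$ is a proper subalgebra of $\D\H$ contained in $L$ and containing $h_1,h_4$, so the two axes cannot be skew. From there the routes diverge. The paper invokes the classification of proper subalgebras containing two distinct rotations (conjugates of $\H$ or of $\SE[2]$) and, in the $\SE[2]$ case, identifies the unique six-dimensional submodule containing $1$ and derives the contradiction $A=L$ with $\dim A=4\neq 6$. You instead split on the geometry of the two axes: for a finite intersection point you use the same divisibility-by-four obstruction (a module over a division algebra), and for parallel axes you exploit the nilpotent element $\eps c=h_4-h_1\in A$ to force $\eps\H\subseteq L$, which collapses the primal projection $\pi(L)$ to real dimension $2$ and contradicts $\pi(L)=\H$. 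This second half is arguably more self-contained than the paper's, which asserts the uniqueness of the six-dimensional $\SE[2]$-submodule without proof. One step of yours needs repair: you justify $\pi(L)=\H$ by claiming the axes of $h_1$ and $h_2$ are neither equal nor parallel, but the lemma is stated for arbitrary half-turns and does not grant this, and Theorem~\ref{thm:coup}.d) cannot be cited here since its incompatibility hypotheses are not part of the lemma. The claim $\pi(L)=\H$ is nevertheless a consequence of $\dim L=6$ alone: if $\pi(h_1),\pi(h_2),\pi(h_3)$ were all proportional, every generating product of $L_{1,2,3}$ would lie in a space conjugate to $\langle 1,\qi,\eps\qj,\eps\qk\rangle$ and $\dim L\le 4$; hence some pair $\pi(h_r),\pi(h_s)$ is independent and $1,\pi(h_r),\pi(h_s),\pi(h_r)\pi(h_s)\in\pi(L)$ span $\H$. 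With that substitution (and reading $\langle 1,h_1,h_4\rangle$ in your intersecting case as the generated subalgebra rather than the linear span), the argument goes through.
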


\begin{proof}
  Let $A\subset\D\H$ be the set of all elements $a$ such that $L$ is
  closed under multiplication with $a$ from the left. Then $A$ is a
  subalgebra, we have $h_1\in A$ and $h_4\in A$, and $A\subset L$
  because $1\in L$. Assume, to that contrary, that $h_1\ne\pm h_4$.
  The only proper subalgebras of $\D\H$ containing two different
  rotations are conjugate to $\SO = \H$ (rotations about one fixed
  point) or to $\SE[2] = \langle 1,\qi,\eps\qj,\eps\qk\rangle$
  (rotations about axes parallel to a fixed direction and translation
  orthogonal to this direction; angled brackets denote linear span).
  The former does not act by left-multiplication on a module of real
  dimension~6. The later acts exactly on one submodule of $\D\H$
  containing~1, namely $\langle 1,\qi,\eps\qj,\eps\qk,\eps,\eps\qi
  \rangle$, which must then be $L$ (up to conjugation). But all
  rotations in this submodule are contained in $A$, hence
  $h_1,h_2,h_3\in A$ and $A=L$, which is a contradiction.
\end{proof}

\begin{lem} \label{lem:main}
  If $l_{1,2,3}=l_{3,4,5}=6$, then $b(1)=b(2)=b(4)=b(5)=1$ and $b(3)=2$.
\end{lem}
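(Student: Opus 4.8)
The plan is to determine every joint length $b(i)=d(i-1,i)=\deg f_{i-1,i}$ from the degrees of the coupling curves. Non-degeneracy (no constant coupling map) gives $b(i)\ge 1$ for all $i$, and by Lemma~\ref{lem:bir} every distance-two coupling map $f_{i,i+2}$ is injective, so $\deg C_{i,i+2}=d(i,i+2)=b(i+1)+b(i+2)$ by Theorem~\ref{thm:bond}.d. The whole point is to convert the hypotheses $l_{1,2,3}=l_{3,4,5}=6$ into matching upper bounds.

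First I would show $b(4)=b(5)=1$. The relative motion $C_{5,3}$ of $o_3$ with respect to $o_5$ runs, along one path, through the joints $h_4,h_5$, so it is a subcurve of the quadric surface $X_{4,5}\subset\P(L_{4,5})=\P^3$; along the other path it runs through $h_1,h_2,h_3$, so it also lies in $\P(L_{1,2,3})$. Hence $C_{5,3}$ lies in the intersection of the quadric $X_{4,5}$ with the subspace $\P(L_{4,5}\cap L_{1,2,3})$. If I can show $L_{4,5}\not\subseteq L_{1,2,3}$, this subspace is a proper $\P^{\le 2}\subsetneq\P^3$, so $C_{5,3}$ is a curve of degree at most two, giving $b(4)+b(5)=\deg C_{5,3}\le 2$ and hence $b(4)=b(5)=1$. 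To see $L_{4,5}\not\subseteq L_{1,2,3}$, suppose the contrary; then $h_4,h_5,h_4h_5\in L_{1,2,3}$, and since $L_{1,2,3}=L_{3,2,1}$ is closed under left multiplication by $L_3\cong\C$ (Theorem~\ref{thm:coup}.a for the reversed sequence), also $h_3h_4,h_3h_5,h_3h_4h_5\in L_{1,2,3}$; thus every generator of $L_{3,4,5}$ lies in $L_{1,2,3}$ and, both spaces having dimension six, $L_{3,4,5}=L_{1,2,3}$. Applying Lemma~\ref{lem:ne} to the six half-turns $(h_1,h_2,h_3,h_5,h_4,h_3)$, whose two triples span $L_{1,2,3}$ and $L_{5,4,3}=L_{3,4,5}$, then forces $h_1=\pm h_5$, contradicting the incompatibility of the consecutive axes $h_5,h_1$. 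The chain-reversal $i\mapsto 6-i$ preserves both hypotheses and interchanges $b(1)\leftrightarrow b(5)$ and $b(2)\leftrightarrow b(4)$, so applying the statement just proved to the reversed linkage yields $b(1)=b(2)=1$.

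It remains to compute $b(3)$, which I would pin between two bounds. For the lower bound, Theorem~\ref{thm:bond}.c applied to the triple $(1,3,5)$ says that $d(1,3)+d(3,5)+d(1,5)=b(1)+b(2)+b(3)+b(4)+b(5)$ is a positive even integer; since four of the terms already equal one, $b(3)$ is even, so $b(3)\ge 2$. For the upper bound I would evaluate the connection number $k(4,1)=d(4,1)+d(3,5)-d(4,5)-d(3,1)$ of Definition~\ref{def:connection-number}; inserting the distances computed above gives $k(4,1)=2-b(3)$. As $k(4,1)$ counts the bonds joining $h_4$ and $h_1$, it is non-negative, so $b(3)\le 2$ and therefore $b(3)=2$.

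The main obstacle is this very last step: a priori the theory allows a bond to contribute negatively to a connection number, so $k(4,1)\ge 0$ is not automatic. I would close the gap by first excluding any bond joining $h_4$ to $h_1$ (or $h_5$ to $h_2$): running the conic argument of the second paragraph on $C_{1,3}$ and on $C_{4,2}$ shows that $l_{4,5,1}=6$ or $l_{5,1,2}=6$ would force $b(3)\le 1$, contradicting the parity just established, so in fact $l_{4,5,1}=l_{5,1,2}=8$ and no bond joins these pairs. It then remains to verify that every bond is elementary, for which the identity $\sum_\beta b_\beta(i)=b(i)=1$ at the four unit joints forces the local joint lengths $b_\beta(i)$ into $\{0,\tfrac12\}$ there; for elementary bonds the connection multiplicities are genuine non-negative counts, and $k(4,1)=2-b(3)\ge 0$ follows. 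Making this elementarity argument fully rigorous, together with the careful treatment of degenerate plane sections in the conic bound, is where the real work lies.
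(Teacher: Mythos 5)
Your argument for $b(1)=b(2)=b(4)=b(5)=1$ and for the parity bound $b(3)\ge 2$ is correct and is essentially the paper's: you bound $\deg C_{3,5}$ by trapping the curve in a hyperplane section of the quadric surface $X_{4,5}$ (the paper phrases the same step via ``the ideal of $C_{3,5}$ is generated by linear and quadratic equations, hence it is not a plane curve''), you rule out $L_{4,5}\subseteq L_{1,2,3}$ by multiplying up to $L_{3,4,5}=L_{1,2,3}$ and invoking Lemma~\ref{lem:ne} exactly as the paper does, you reverse the chain for $b(1),b(2)$, and you get evenness of $b(3)$ from Theorem~\ref{thm:bond}.c on a triangle containing $o_3$.

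The gap is the upper bound $b(3)\le 2$, and it is exactly where you flagged it. Your step $k(4,1)=2-b(3)\ge 0$ is not available: the paper explicitly cannot exclude negative connection numbers (it even introduces dashed lines in bond diagrams for them), and neither of your patches closes the hole. The corollary following Corollary~\ref{cor:impl} only says that a bond connecting $h_4$ and $h_1$ \emph{with positive multiplicity} forces $l_{4,5,1}\le 6$; establishing $l_{4,5,1}=l_{5,1,2}=8$ therefore rules out positive contributions to $k(4,1)$, which is the wrong sign --- what you need to forbid is a bond with $k_\beta(4,1)<0$. The elementarity patch also fails as stated: $b(3)=\sum_\beta b_\beta(3)$ could a priori be concentrated as $b_\beta(3)=1$ on one conjugate pair, so $\sum_i b_\beta(i)=2$ need not hold for every bond. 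Two clean repairs exist. The paper's: by the triangle inequality $b(3)=d(2,3)\le d(2,5)+d(5,3)=4$, so by parity $b(3)\in\{2,4\}$; and $b(3)=4$ would give $d(1,3)=b(2)+b(3)=5>3=d(1,5)+d(5,3)$, violating the triangle inequality again. Alternatively, your route can be salvaged by noting that for joints at cyclic distance two the connection number is automatically non-negative: by Theorem~\ref{thm:local}.c, $d_\beta(3,5)=d_\beta(3,4)+d_\beta(4,5)$, so
\begin{equation*}
  k_\beta(4,6)=d_\beta(4,6)+d_\beta(3,5)-d_\beta(4,5)-d_\beta(3,6)=d_\beta(4,6)+d_\beta(3,4)-d_\beta(3,6)\ge 0
\end{equation*}
by the local triangle inequality of Theorem~\ref{thm:local}.a; summing over bonds gives $k(4,1)\ge 0$ without any elementarity or coupling-dimension input. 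Either way the missing step is elementary, but as written your proof does not contain it.
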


\begin{proof}
  Let $L:=L_{1,2,3}\cap L_{5,4,3}$. The dimension of $L$ is even,
  because it is an $L_3$-right vectorspace. By Lemma~\ref{lem:ne} the
  spaces $L_{1,2,3}$ and $L_{5,4,3}$ are different. Hence $\dim(L)\le
  4$. On the other hand, $\dim(L)\ge l_{1,2,3}+l_{6,5,4}-8$. Hence, we
  have $\dim(L)=4$.

  First we prove that $d(3,5)=2$. By Theorem~\ref{thm:bond}.d we have
  $d(3,5) = b(4) + b(5) \ge 1 + 1 = 2$ ($b(i) > 0$ because all joints
  move). Assume, to the contrary, that $d(3,5) \ge 3$. Then $C_{3,5}$
  is a curve of degree at least three, because $\deg f_{3,5} = 1$ by
  Lemma~\ref{lem:bir}. On the other hand, the ideal of $C_{3,5}$ is
  generated by linear and quadratic equations, because
  $C_{3,5}=X_{1,2,3}\cap X_{5,4}$ and the ideals of $X_{1,2,3}$ and of
  $X_{5,4}$ are generated by linear and quadratic equations. Hence,
  $C_{3,5}$ is not a plane curve, because otherwise the degree of
  $C_{3,5}$ would be at most two. Because of $C_{3,5} \subset L' :=
  L_{5,4} \cap L_{1,2,3}$, this implies $\dim L' = 4$ and, thus, $L =
  L'$. But then $L_{5,4} \subseteq L_{1,2,3}$. If we multiply both
  sides with $h_3$ from the right, we get $L_{5,4,3} \subseteq
  L_{1,2,3}$. Both spaces have the same dimension, hence $L_{1,2,3} =
  L_{5,4,3}$ -- a contradiction. This proves $d(3,5)=2$ and also
  $b(4)=b(5)=1$. Applying the same argument for the linkage
  $(h_5,h_4,h_3,h_2,h_1)$, we get $b(1)=b(2)=1$.

  It remains to be shown that $b(3)=2$. By the triangle inequality,
  $b(3)=d(2,3)\le d(2,5)+d(3,5)=b(1)+b(2)+b(4)+b(5)=4$. By
  Theorem~\ref{thm:bond}.c, $d(2,5)+d(2,3)+d(3,5)$ is even, hence the
  bond length $b(3)$ is even. Clearly, $0 < b(3) \le 4$. If $b(3)=4$,
  then $d(1,3)=b(2)+b(3)=5$, contradicting the triangle inequality
  $d(1,3)\leq d(1,5)+d(3,5) = b(1) + b(4) + b(5) = 3$. Thus, $b(3)=2$
  and the proof is finished.
\end{proof}

\begin{lem} \label{lem:5r}
  Let $L$ be a non-degenerate 5R linkage. Then exactly one of its
  joint lengths is equal to two, and all others are equal to one.
\end{lem}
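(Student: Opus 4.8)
The plan is to reduce the whole statement to Lemma~\ref{lem:main}. By Lemma~\ref{lem:gt4} together with the evenness in Theorem~\ref{thm:coup}.a), each of the five cyclic coupling dimensions of three consecutive joints satisfies $l_{i,i+1,i+2}\in\{6,8\}$. If I can exhibit two overlapping triples $\{i-2,i-1,i\}$ and $\{i,i+1,i+2\}$ — sharing the joint $h_i$ and together covering all five joints — that are both of dimension six, then Lemma~\ref{lem:main}, applied after the appropriate cyclic relabelling, immediately gives $b(i)=2$ and $b(k)=1$ for the other four joints, which is exactly the assertion. So the entire task is to produce one such pair of dimension-six triples.

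To organise this I would view the five triples as the vertices of a five-cycle whose five edges are the admissible pairings, one for each shared joint $h_i$. Finding the desired pair fails precisely when every pairing already contains a dimension-eight triple, that is, when the dimension-eight triples form a vertex cover of this five-cycle. Since the minimum vertex cover of a five-cycle has size three, it suffices to rule out any arrangement of three (or more) dimension-eight triples that covers all pairings.

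This is where the metric data enters. Writing $B=\sum_{k=1}^{5}b(k)$, I would first note the unconditional bounds $b(i)+b(i+1)\le B/2$ for every consecutive pair: two links that are non-adjacent in the link cycle are joined by a two-joint arc and a three-joint arc, and Theorem~\ref{thm:bond}.d) evaluates the distance along the short arc as the sum of those two joint lengths, while the triangle inequality of Theorem~\ref{thm:bond}.b) bounds it by the sum along the long arc; since the two arcs partition all five joints this reads $b(i)+b(i+1)\le B-(b(i)+b(i+1))$. On the other hand, Corollary~\ref{cor:impl} in cyclically shifted form shows that $l_{i,i+1,i+2}=8$ forces the additivity $d(i-1,i+2)=b(i)+b(i+1)+b(i+2)$; comparing this with the bound along the complementary arc upgrades the inequality for the complementary adjacent pair to the equality $b(j)+b(j+1)=B/2$. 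Thus each dimension-eight triple pins one adjacent pair sum to exactly $B/2$.

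The finish is a short calculation. For any (cyclically equivalent) minimal vertex cover the three resulting equalities ``adjacent pair $=B/2$'' over-determine the $b(k)$: pairwise subtraction identifies several of the joint lengths, and substituting back into $B=2\cdot(B/2)$ forces one remaining joint length to vanish, contradicting $b(k)\ge 1$ (valid because no coupling map is constant). Any cover by four or five dimension-eight triples contains such a minimal cover, so the same contradiction applies. Hence no vertex cover can occur, the required pair of dimension-six triples exists, and Lemma~\ref{lem:main} completes the argument. The main obstacle is exactly this case of three or more dimension-eight triples: the purely combinatorial bookkeeping is not enough there, and one must feed in the quantitative content of Corollary~\ref{cor:impl} together with the triangle inequalities to force the contradiction.
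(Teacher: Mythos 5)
Your proposal is correct and follows essentially the same route as the paper: reduce to Lemma~\ref{lem:main} by producing two dimension-six triples sharing one joint, and exclude the obstruction using Corollary~\ref{cor:impl} together with Theorem~\ref{thm:bond}.b) and d). The paper gets there more economically by observing that, since $5$ is odd, some admissible pairing already has \emph{equal} dimensions on both sides, so it only needs to refute the case of two paired dimension-eight triples (two equations giving $b(i)=0$); your vertex-cover case with three dimension-eight triples necessarily contains such a pair anyway, so your three-equation computation, while valid, is doing slightly more work than needed.
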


\begin{proof}
  By Lemma~\ref{lem:gt4}, the numbers $l_{i,i+1,i+2}$ can only be $6$
  or $8$ for $i=1,\dots,5$ (the indices are labeled modulo 5). Because
  $5$ is an odd number, there exists an index $i$ such that
  $l_{i-2,i-1,i}=l_{i,i+1,i+2}$. Without loss of generality, we may
  assume $i=3$. We distinguish two cases.

  Case~1: $l_{1,2,3}=l_{3,4,5}=8$. By
  Theorem~\ref{thm:bond}.d and Corollary~\ref{cor:impl}, $l_{1,2,3}=8$
  implies $d(5,3)=d(5,1)+d(1,2)+d(2,3)=b(1)+b(2)+b(3)=b(4)+b(5)$.
  Similarly, $l_{3,4,5}=8$ implies $b(3)+b(4)+b(5)=b(1)+b(2)$. Hence
  $b(3)=0$, a contradiction.

  Case~2: $l_{1,2,3}=l_{3,4,5}=6$. Then Lemma~\ref{lem:main} applies.
\end{proof}

We are already in a position to state a new result on overconstrained
5R linkages:
\begin{thm}
  The coupling motions of a non-degenerate 5R linkage can be
  parametrized by four of the five joint angles. Its coupling curves
  (that is, the relative motions as curves on the Study quadric) are
  plane conics and twisted cubics.
\end{thm}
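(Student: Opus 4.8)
The plan is to leverage Lemma~\ref{lem:5r}, which tells us that exactly one joint length, say $b(3)=2$, equals two while all others are equal to one. This immediately gives the first assertion: since $b(1)=b(2)=b(4)=b(5)=1$, any one of these four joints has coupling map $f_{i-1,i}$ of degree one, hence birational, and by Corollary~\ref{cor:actuated} the configuration curve is parametrized by the corresponding rotation angle. Thus the coupling motions can be parametrized by, for instance, the four angles $t_1,t_2,t_4,t_5$, proving the parametrization claim.

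For the statement about the coupling curves, I would first compute the distance matrix $D=(d(i,j))$ entirely from the joint lengths using the properties of the pseudometric in Theorem~\ref{thm:bond}. With $b(3)=2$ and $b(k)=1$ otherwise, the neighbouring distances $d(i,i+1)=b(i+1)$ are known, and Theorem~\ref{thm:bond}.d) gives $d(i-1,i+1)=b(i)+b(i+1)$ for the ``distance-two'' entries. The key observation is that every coupling curve degree $d(i,j)$ is therefore either $1$, $2$, or $3$: distances of $1$ correspond to the neighbouring rotation lines, distances of $2$ to plane conics, and the distance-$3$ entries (those involving the crossing that ``sees'' the weight-two joint $h_3$) to space curves of degree three. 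Since $\deg f_{i,j}=1$ by Lemma~\ref{lem:bir} for the relevant index gaps, the algebraic degree $d(i,j)$ equals the geometric degree of $C_{i,j}$.

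It then remains to identify the geometric type of the degree-two and degree-three curves. A nondegenerate coupling curve of degree two on the Study quadric must be a plane conic, since an irreducible curve of degree two is necessarily contained in a plane. For the degree-three curves I would argue that $C_{i,j}$ is a \emph{twisted} cubic rather than a plane cubic: using the coupling equality $C_{i,j}=X_{i+1,\dots,j}\cap X_{i-1,\dots,-n+j}$, the ideal of $C_{i,j}$ is generated by linear and quadratic equations (exactly as in the proof of Lemma~\ref{lem:main}), so a plane cubic would have degree at most two, a contradiction. Hence the degree-three coupling curves are genuine space curves, and the only irreducible nondegenerate space curve of degree three is the twisted cubic.

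**The hard part will be**

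The main obstacle is bookkeeping: one must verify that the distance matrix really takes only the values $1,2,3$ for all pairs and that the degree-three entries arise only in the configurations where the quadratic-and-linear generation argument forces non-planarity. I expect the triangle inequality and the evenness condition of Theorem~\ref{thm:bond}.c) to pin the matrix down uniquely, but the delicate point is the ideal-theoretic argument excluding plane cubics, which must be applied to precisely the right pairs $(i,j)$ and relies on the coupling equality together with the fact that $X_{i+1,\dots,j}$ and $X_{i-1,\dots,-n+j}$ have ideals generated in degrees one and two.
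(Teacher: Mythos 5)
Your proposal is correct and follows essentially the same route as the paper's proof: Lemma~\ref{lem:5r} together with Corollary~\ref{cor:actuated} gives the parametrization by the four joints of length one, Theorem~\ref{thm:bond}.d) combined with Lemma~\ref{lem:bir} shows the non-adjacent coupling curves have degree $b(i+1)+b(i+2)\in\{2,3\}$, and the fact that the ideals are generated by linear and quadratic forms rules out plane cubics, leaving only plane conics and twisted cubics. The one small wrinkle is your remark that ``distances of $2$'' correspond to plane conics: for the adjacent pair at the weight-two joint one has $d(i-1,i)=2$ with $C_{i-1,i}$ a doubly traversed line rather than a conic, so the conic/cubic dichotomy should be asserted only for the non-adjacent pairs covered by Lemma~\ref{lem:bir} --- which is exactly where you apply it, so the argument stands.
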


\begin{proof}
  The coupling curves can be parametrized by the angles at all four
  joints of length one. The coupling curves $C_{i-1,i+2}$ have degree
  $d(i,i+2)=b(i+1)+b(i+2)$ for $i=1,\dots,5$ (with cyclic numbering of
  indices), and this is two or three. Since the ideals of the coupling
  curves are generated by linear and quadratic forms, they can only be
  plane conics or twisted cubics.
\end{proof}

The main result of this section is
\begin{thm}
  \label{thm:goldberg}
  Every non-degenerate 5R linkage is a Goldberg linkage.
\end{thm}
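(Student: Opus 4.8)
The plan is to turn the algebraic data forced by the bond structure into an explicit geometric identification with Goldberg's construction. By Lemma~\ref{lem:5r} and its proof we may relabel the joints cyclically so that $b(3)=2$ while $b(1)=b(2)=b(4)=b(5)=1$; since Case~1 of that proof is contradictory, Case~2 applies and (equivalently, via Lemma~\ref{lem:main}) we have $l_{1,2,3}=l_{3,4,5}=6$. Theorem~\ref{thm:coup}.d) then tells us that both outer triples $(h_1,h_2,h_3)$ and $(h_3,h_4,h_5)$ satisfy the Bennett conditions, so each extends to a genuine Bennett four-bar $\mathcal{B}_1=(h_1,h_2,h_3,g_1)$ and $\mathcal{B}_2=(h_3,h_4,h_5,g_2)$, with the closing axes $g_1,g_2$ reconstructed exactly as in the proof of Theorem~\ref{thm:coup}.d); the two four-bars share the doubled joint $h_3$.

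First I would record the further constraints that come for free from the bond diagram. Using $d(i,i+1)=b(i+1)$ and $d(i,i+2)=b(i+1)+b(i+2)$ (Theorem~\ref{thm:bond}.d)) to fill in the distance matrix and inverting it via \eqref{eq:20}, one finds that besides the two ``opposite-joint'' bonds of $\mathcal{B}_1$ and $\mathcal{B}_2$ (connecting $h_1$ with $h_3$ and $h_3$ with $h_5$) there is a cross-bond connecting $h_2$ and $h_4$. By the corollary that a bond joining $h_i$ and $h_{i+2}$ forces $h_i,h_{i+1},h_{i+2}$ to be concurrent or Bennett, the middle triple $(h_2,h_3,h_4)$ is also special; concurrency is excluded because it would force $l_{2,3,4}=4$, contradicting Lemma~\ref{lem:gt4}, so $(h_2,h_3,h_4)$ satisfies the Bennett conditions as well. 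This ``Bennett everywhere around $h_3$'' over-determination is the geometric signature of a Goldberg linkage.

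The substance of the proof is to show that the closure condition \eqref{eq:1} of the $5R$ is equivalent to the pair of Bennett closures of $\mathcal{B}_1$ and $\mathcal{B}_2$, so that the linkage is precisely the fusion of these two four-bars along $h_3$ that defines Goldberg's family \cite{Karger}. The natural tool is the factorization of motion polynomials of \cite{HSS,HSS2}: each Bennett loop is encoded by a quadratic motion polynomial admitting a second factorization, and since $b(1)=1$ the configuration curve is birationally parametrized by $t_1$, with $t_3$ a degree-two function of that parameter because $b(3)=2$. I would write the motion transmitted across $h_3$ as a product of the two Bennett factors and show that the second factorization of the combined polynomial returns the complementary part of the chain, thereby identifying $g_1$ and $g_2$ as the axes removed by the fusion and reproducing Goldberg's angle relations at the remaining five joints.

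The main obstacle is exactly this compatibility step: one must verify that the rotation parameter $t_3$ at the shared, doubled joint can be made simultaneously consistent with both Bennett sub-motions while still satisfying the global closure, and that this pins the two constituent four-bars into the specific relative position of Goldberg's family rather than an arbitrary amalgamation of two Bennett linkages. In short, the hard part is converting the purely algebraic input ($b(3)=2$, the three Bennett conditions, and the cross-bond $h_2$--$h_4$) into the explicit kinematic identification and ruling out competing combinations; the reductions in the first two paragraphs are comparatively routine once the preceding lemmas are available.
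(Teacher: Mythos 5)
There is a genuine gap, and you have in fact flagged it yourself: everything up to and including the observation that the three consecutive triples around the doubled joint satisfy the Bennett conditions is correct (and consistent with the paper's preliminaries via Lemma~\ref{lem:5r}, Theorem~\ref{thm:coup}.d and the connection numbers), but the decisive step is missing. Extending $(h_1,h_2,h_3)$ and $(h_3,h_4,h_5)$ to Bennett four-bars produces two closing axes $g_1$ and $g_2$, each uniquely determined by a \emph{different} triple of axes; Goldberg's construction requires $g_1=g_2$ (this common axis is the removed joint) \emph{and} that the one-parameter motion of the 5R restricts on each four-bar to its Bennett motion with a consistent doubled rotation at $h_3$. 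Nothing in the local Bennett conditions forces this coincidence --- it can only come from the global mobility-one hypothesis --- and your proposal offers no argument for it beyond naming it ``the main obstacle.'' An arbitrary amalgamation of two Bennett four-bars sharing only the axis $h_3$ is rigid, so this is precisely the content of the theorem, not a routine verification.

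The paper closes this gap by a different mechanism than the one you sketch. Rather than assembling the linkage from the two outer triples, it takes a single coupling curve spanning the doubled joint, which by Lemma~\ref{lem:bir} and the joint lengths is a twisted cubic, parametrizes it by a cubic motion polynomial, and invokes the factorization theory of \cite{HSS,HSS2}: the several factorizations of that one cubic into linear (rotation) factors yield simultaneously the chain through $h_2$ and the doubled $h_3$, the complementary chain through $h_4,h_5,h_1$, and a third chain involving a \emph{new} axis $h_6$, together with the statement that $(h'_1,h'_2,h'_3,h'_6)$ and $(h''_3,h'_4,h'_5,h'_6)$ are Bennett quadruples. The common axis $h_6$ and the compatibility of the two Bennett motions are thus outputs of the factorization theorem rather than something to be reconciled afterwards. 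If you want to salvage your route, you would need to prove directly that $g_1=g_2$, e.g.\ by showing that the coupling space $L_{1,2,3}\cap L_{3,4,5}$ (of dimension $4$, as in the proof of Lemma~\ref{lem:main}) contains the rotations about a common fourth axis closing both triples; as written, the proposal stops exactly where the proof has to begin.
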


\begin{proof}
  Denote the linkage by $L=(h_1,h_2,h_3,h_4,h_5)$. By
  Lemma~\ref{lem:5r}, there is one joint, say $h_3$, of length two,
  that is $b(3) = 2$. The coupling curve $C_{1,4}$ is a twisted cubic,
  in particular it is a rational curve of degree three. We fix a cubic
  parametrization $\phi:t\mapsto P(t)$ of degree three and apply the
  synthesis method of \cite{HSS,HSS2} for synthesizing open 3R chains
  that are parametrized linearly and that produce the motion $\phi$.

  By general results of \cite{HSS,HSS2}, the relative motion $C_{1,4}$
  admits parametrizations
  \begin{gather*}
    (t-h''_3)(t-h_3')(t-h_2'),\quad
    (t-h'_4)(t-h_5')(t-h_1'),\quad
    (t-h''_3)(t-h'_6)(t-h_1')
  \end{gather*}
  with $h'_i \in L_i$ for $i=1,\ldots,6$ and $h''_3 \in L_3$ such that
  $(h'_1,h'_2,h'_3,h'_6)$ and $(h''_3,h'_4,h'_5,h'_6)$ is a Bennett
  quadruple. The original 5R linkage can be constructed by composition
  of these two Bennett linkages, with the common axes $h_3$, $h_6$,
  and subsequent removal of the joint at $h_6$. This is exactly
  Goldberg's construction \cite{Goldberg,Wohlhart} which we have shown
  to be necessary for a non-degenerate 5R linkage. The existence of Goldberg's linkage is
  well-known and easy to see.
\end{proof}

\section{Conclusion}
\label{sec:conclusion}

This article featured a rigorous introduction of bonds, connection
numbers and bond multiplicities. Bond theory is a new tool for
analyzing overconstrained linkages with one degree of freedom and one
can hope to gain new insight into their behavior. In order to
demonstrate the usefulness of bond theory, we gave a new proof of
Karger's classification theorem for overconstrained 5R chains. Note
that bond theory can provide necessary conditions for overconstrained
linkages but neither their sufficiency nor existence of linkages with
a particular bond structure is automatically implied (compare also our
proof of Theorem~\ref{thm:goldberg}). In a next step, we plan to work
out the bond structure for overconstrained 6R chains, both known and
new. Some examples have already been given in this paper.

\section*{Acknowledgements}

We would like to thank Zijia Li (RICAM Linz) for helping us with
computations of examples.
This research was supported by the Austrian Science Fund (FWF) under
grants I~408-N13 and DK~W~1214-N15 and the National Science Foundation
Research Grant No. 77476.


\begin{thebibliography}{17}
\providecommand{\natexlab}[1]{#1}
\providecommand{\url}[1]{\texttt{#1}}
\expandafter\ifx\csname urlstyle\endcsname\relax
  \providecommand{\doi}[1]{doi: #1}\else
  \providecommand{\doi}{doi: \begingroup \urlstyle{rm}\Url}\fi

\bibitem[Angeles(1989)]{angeles89}
J.~Angeles.
\newblock \emph{Rational Kinematics}, volume~34 of \emph{Springer Tracts in
  Natural Philosophy}.
\newblock Springer, New York, 1989.

\bibitem[Baker(1980)]{Baker80}
J.~E. Baker.
\newblock An analysis of {Bricard} linkages.
\newblock \emph{Mech. Mach. Th.}, 15\penalty0 (4):\penalty0 267--286, 1980.

\bibitem[Berge(1989)]{berge89}
C.~Berge.
\newblock \emph{Hypergraphs. Combinatorics of Finite Sets}.
\newblock North-Holland Mathematical Library, 1989.

\bibitem[Blaschke(1960)]{Blaschke:60}
W.~Blaschke.
\newblock \emph{Kinematik und Quaternionen}.
\newblock VEB Deutscher Verlag der Wissenschaften, Berline, 1960.

\bibitem[Dietmaier(1995)]{Dietmaier}
P.~Dietmaier.
\newblock \emph{Simply overconstrained mechanisms with rotational joints
  (habilitation thesis)}.
\newblock Univ. Graz, 1995.

\bibitem[Goldberg(1943)]{Goldberg}
M.~Goldberg.
\newblock New five-bar and six-bar linkages in three dimensions.
\newblock \emph{Trans. ASME}, 65:\penalty0 649--656, 1943.

\bibitem[Hao(1998)]{Hao}
K.~Hao.
\newblock Dual number method, rank of a screw system and generation of {Lie}
  sub-algebras.
\newblock \emph{Mech. Mach. Theory}, 33\penalty0 (7):\penalty0 1063--1084,
  1998.

\bibitem[Heged\"us et~al.(2012{\natexlab{a}})Heged\"us, Schicho, and
  Schr\"ocker]{HSS}
G.~Heged\"us, J.~Schicho, and H.-P. Schr\"ocker.
\newblock Construction of overconstrained linkages by factorization of rational
  motions.
\newblock In Jadran Lenar\v{c}i\v{c} and Manfred Husty, editors, \emph{Latest
  Advances in Robot Kinematics}, pages 213--220. Springer, 2012{\natexlab{a}}.
\newblock \doi{10.1007/978-94-007-4620-6_27}.

\bibitem[Heged\"us et~al.(2012{\natexlab{b}})Heged\"us, Schicho, and
  Schr\"ocker]{HSS3}
G.~Heged\"us, J.~Schicho, and H.-P. Schr\"ocker.
\newblock Bond theory and closed {5R} linkages.
\newblock In Jadran Lenar\v{c}i\v{c} and Manfred Husty, editors, \emph{Latest
  Advances in Robot Kinematics}, pages 221--228. Springer, 2012{\natexlab{b}}.
\newblock \doi{10.1007/978-94-007-4620-6_28}.

\bibitem[Heged\"us et~al.(2013)Heged\"us, Schicho, and Schr\"ocker]{HSS2}
G.~Heged\"us, J.~Schicho, and H.-P. Schr\"ocker.
\newblock Factorization of rational curves in the {Study} quadric and revolute
  linkages.
\newblock Accepted for publication in Mech. Machine Theory, 2013.
\newblock URL \url{http://geometrie.uibk.ac.at/schroecker/qf/}.

\bibitem[Hunt(1978)]{hunt78}
K.~H. Hunt.
\newblock \emph{Kinematic Geometry of Mechanisms}.
\newblock Oxford Engineering Science Series. Oxford University Press, 1978.

\bibitem[Husty and Schr\"ocker(2010)]{husty10}
M.~Husty and H.-P. Schr\"ocker.
\newblock Algebraic geometry and kinematics.
\newblock In I.~Z. Emiris, F.~Sottile, and Th. Theobald, editors,
  \emph{Nonlinear Computational Geometry}, volume 151 of \emph{The IMA Volumes
  in Mathematics and its Applications}, chapter Algebraic Geometry and
  Kinematics, pages 85--107. Springer, 2010.

\bibitem[I. and Shokurov(1998)]{danilov98}
Danilov~V. I. and V.~V. Shokurov.
\newblock \emph{Algebraic Curves, Algebraic Manifolds and Schemes}.
\newblock Springer, 1998.

\bibitem[Karger(1998)]{Karger}
A.~Karger.
\newblock Classification of {5R} closed kinematic chains with self mobility.
\newblock \emph{Mech. Mach. Th.}, pages 213--222, 1998.

\bibitem[Selig(2005)]{selig05}
J.~Selig.
\newblock \emph{Geometric Fundamentals of Robotics}.
\newblock Monographs in Computer Science. Springer, 2 edition, 2005.

\bibitem[Shafarevich(1974)]{Shafarevich:74}
I.~R. Shafarevich.
\newblock \emph{Basic Algebraic Geometry}.
\newblock Springer, 1974.

\bibitem[Wohlhart(1991)]{Wohlhart}
K.~Wohlhart.
\newblock Merging two general {Goldberg} {5R} linkages to obtain a new {6R}
  space mechanism.
\newblock \emph{Mech. Mach. Th.}, 26:\penalty0 659--668, 1991.

\end{thebibliography}

\end{document}